\documentclass[12pt]{amsart}
\usepackage{amsmath,amssymb,amsbsy,amsfonts,amsthm,latexsym,amsopn,amstext,
                                                    amsxtra,euscript,amscd}
\usepackage[english]{babel}
\usepackage{hyperref}
\begin{document}

\newtheorem{thm}{Theorem}
\newtheorem{lem}[thm]{Lemma}
\newtheorem{claim}[thm]{Claim}
\newtheorem{remark}[thm]{Remark}
\newtheorem{cor}[thm]{Corollary}
\newtheorem{prop}[thm]{Proposition} 
\newtheorem{definition}{Definition}
\newtheorem{question}[thm]{Open Question}
\newtheorem{conj}[thm]{Conjecture}
\newtheorem{prob}{Problem}
\def\vol {{\mathrm{vol\,}}}
\def\squareforqed{\hbox{\rlap{$\sqcap$}$\sqcup$}}
\def\qed{\ifmmode\squareforqed\else{\unskip\nobreak\hfil
\penalty50\hskip1em\null\nobreak\hfil\squareforqed
\parfillskip=0pt\finalhyphendemerits=0\endgraf}\fi}

\def\cA{{\mathcal A}}
\def\cB{{\mathcal B}}
\def\cC{{\mathcal C}}
\def\cD{{\mathcal D}}
\def\cE{{\mathcal E}}
\def\cF{{\mathcal F}}
\def\cG{{\mathcal G}}
\def\cH{{\mathcal H}}
\def\cI{{\mathcal I}}
\def\cJ{{\mathcal J}}
\def\cK{{\mathcal K}}
\def\cL{{\mathcal L}}
\def\cM{{\mathcal M}}
\def\cN{{\mathcal N}}
\def\cO{{\mathcal O}}
\def\cP{{\mathcal P}}
\def\cQ{{\mathcal Q}}
\def\cR{{\mathcal R}}
\def\cS{{\mathcal S}}
\def\cT{{\mathcal T}}
\def\cU{{\mathcal U}}
\def\cV{{\mathcal V}}
\def\cW{{\mathcal W}}
\def\cX{{\mathcal X}}
\def\cY{{\mathcal Y}}
\def\cZ{{\mathcal Z}}

\def\NmQR{N(m;Q,R)}
\def\VmQR{\cV(m;Q,R)}

\def\Xm{\cX_m}

\def \A {{\mathbb A}}
\def \C {{\mathbb C}}
\def \F {{\mathbb F}}
\def \L {{\mathbb L}}
\def \K {{\mathbb K}}
\def \N {{\mathbb N}}
\def \Q {{\mathbb Q}}
\def \R {{\mathbb R}}
\def \Z {{\mathbb Z}}
\def \fS{\mathfrak S}

\def\\{\cr}
\def\({\left(}
\def\){\right)}
\def\fl#1{\left\lfloor#1\right\rfloor}
\def\rf#1{\left\lceil#1\right\rceil}

\def\Tr{{\mathrm{Tr}}}
\def\Im{{\mathrm{Im}}}
\def\ov#1{{\overline#1}}

\def \bFp {\overline \F_p}

\newcommand{\pfrac}[2]{{\left(\frac{#1}{#2}\right)}}

\def \Prob{{\mathrm {}}}
\def\e{\mathbf{e}}
\def\ep{{\mathbf{\,e}}_p}
\def\epp{{\mathbf{\,e}}_{p^2}}
\def\em{{\mathbf{\,e}}_m}

\def\Res{\mathrm{Res}}
\def\Orb{\mathrm{Orb}}

\def\vec#1{\mathbf{#1}}
\def\flp#1{{\left\langle#1\right\rangle}_p}

\def\Crk{\mathrm{Crk}\,}

\def\mand{\qquad\mbox{and}\qquad}

\newcommand{\comm}[1]{\marginpar{%
\vskip-\baselineskip 
\raggedright\footnotesize
\itshape\hrule\smallskip#1\par\smallskip\hrule}}

\title{Polynomial Values in 
Affine Subspaces of Finite Fields}

\author[A. Ostafe] {Alina Ostafe} 
\address{School of Mathematics and Statistics, University of New South Wales, 
Sydney, NSW 2052, Australia}
\email{alina.ostafe@unsw.edu.au}

\date{\today}


\begin{abstract}  In this paper we introduce a new approach and  obtain new results 
for the problem of studying  polynomial images of  affine subspaces of finite fields. 
We  improve and generalise several previous known results,  and also extend the range of such results to polynomials of degrees higher than the characteristic of the field. Such results have 
a wide scope of applications similar to those associated with their counterparts
studying consecutive intervals over prime fields instead of  affine subspaces. 
Here we give only two
immediate consequences: to a bound on the size of the  intersection of orbits 
of polynomial dynamical systems with affine subspaces and to the Waring problem
 in affine subspaces. These results are based on  estimates for a certain new type of 
 exponential sums. 
\end{abstract}

\subjclass[2010]{11T06, 11T23, 37P05, 37P55}

\keywords{finite fields, exponential sums, polynomial dynamics, Waring problem}

\maketitle

\section{Introduction}

\subsection*{Motivation} 

Given a polynomial $f$ over a field $\F$ and two ``interesting''
finite sets $\cA, \cB \subseteq \F$ it is natural to ask about the 
size of the intersection 
$$
f(\cA) \cap \cB = \# \{f(a)~:~  a \in \cA\ \text{and}\ f(a) \in \cB\}
$$
and in particular improve the trivial bound $\min\{\#\cA, \# \cB\}$
on the size of this intersection. In particular,  for the case of 
prime finite fields $\F_p$ of $p$ elements with  $\cA, \cB$ chosen as intervals of consecutive 
elements (in a natural ordering 
of elements of $\F_p$)  a series of such results have been obtained 
in~\cite{Chang3,CCGHSZ,CGOS} where also a broad variety 
 of application has been given.  For example, one 
 of motivating applications for these results comes from 
 the study of the number of points in polynomial orbits that fall
 in a given  interval, see~\cite{Chang1,Chang2,CGOS,GuShp}.

Here we mostly concentrate on the case of finite fields that are  
high degree extensions of prime fields. Furthermore,  our sets 
$\cA, \cB$ are affine subspaces which are   natural analogues 
of   intervals in these settings. 

More precisely, for a prime power  $q$ and an integer $r>1$ we
denote by $\K = \F_q$ and $\L = \F_{q^r}$  the finite fields  of 
$q$ and $q^r$ elements, respectively, and consider affine subspaces
of $\L$ over $\K$. We are especially interested in the case when 
the dimension $s$ of these spaces is small compared to $r$ and thus
standard approaches via algebraic geometry methods (such as the 
Weil bound) do not apply.

We note that a similar point of view has recently been accepted by Cilleruelo 
and Shparlinski~\cite{CillShp} and 
by Roche-Newton and Shparlinski~\cite{RNS} who obtained several results
in this direction via the methods of additive combinatorics. In fact, 
the results and method of~\cite{CillShp} apply only to a very special class 
of affine spaces, while~\cite{RNS} addresses the case of arbitrary 
affine spaces. 
Here, using a different approach, we improve some of the results of~\cite{RNS}
and also obtain a series of other results. In particular, we obtain 
some nontrivial results for a class of polynomials of degree $d \ge p$, 
where $p$ is the characteristic of $\L$, while for the inductive
method of~\cite{RNS} the condition $d< p$ seems to be unavoidable.

More precisely,  our approach appeals to the recent 
bounds of Bourgain and Glibichuk~\cite{BG} of multilinear exponential 
sums in arbitrary finite fields which we couple with the classical 
van der Corput differencing. We use this combination to estimate 
exponential sums with polynomials of degree $d$ along affine spaces.

We remark that under some natural conditions the dimension $s$ 
of these spaces can be as low as $r/d$ by the order of magnitude. 
This corresponds exactly to the lowest possible length of 
intervals over $\F_p$ for which one can estimate nontrivially  the 
corresponding exponential sums via Vinogradov's method, see
the recent striking results of Wooley~\cite{Wool1,Wool2,Wool3}.

As in the previous works in this direction, we also give some applications 
of our results.  

Namely, we study the intersection of orbits of 
polynomial dynamical systems and affine spaces and improve and complement 
some results of Roche-Newton and Shparlinski~\cite{RNS}; both are analogues of those of~\cite{Chang1,Chang2,CGOS,GuShp}. We also recall that this question has been introduced
by Silverman and Viray~\cite{SiVi} in characteristic zero 
and then  studied using a very different technique.

Finally, we also consider  the Waring
problem in subspaces.

 We now outline in more detail our main results, that are given in Theorems~\ref{thm:charsumpoly},
 \ref{thm:arbiter} and~\ref{thm:expN} below.

\subsection*{Exponential sums over affine subspaces and polynomial values in affine subspaces} 
Our first motivation is to estimate the number of elements $u$ in an affine subspace $\cA$ of $\L$, that is, a translate of a linear subspace of $\L$, such that $f(u)$ falls also in an affine subspace $\cB$ of $\L$. We denote this number by $\cI_{f}(\cA,\cB)$, that is,
for a nonlinear polynomial $f\in\L[X]$,
\begin{equation}
\label{eq:f(A)B}
\cI_{f}(\cA,\cB)=\#\{u\in\cA\mid f(u)\in \cB\}.
\end{equation}

The basic tool in obtaining estimates for $\cI_{f}(\cA,\cB)$ is using a recent estimate of Bourgain and Glibichuk~\cite[Theorem 4]{BG} on multilinear exponential sums over subsets of $\L$, see Lemma~\ref{lem:BG} below. To arrive to using this result, we apply the classical 
van der Corput differencing method for our exponential sum to reduce the degree of the polynomial $f$, see also~\cite[Theorem C]{Bou}. However, this method was applied so far only with polynomials of degree less than $p$. 

Let $\psi$ be an additive character of $\L$ and $\chi:\L\to\C$  a function satisfying $\chi(x+y)=\chi(x)\chi(y)$, $x,y\in\L$.
The first main result of this paper is obtaining, under certain conditions, estimates for exponential sums over affine subspaces $\cA$ of $\L$ of the type
$$
\sum_{x\in\cA}\chi(x)\psi(f(x)).
$$
What is new about this result is that it applies to several classes of  polynomials of degree larger than $p$ or a multiple of $p$, see Theorem~\ref{thm:charsumpoly}, in contrast to previous results that apply only to polynomial of degree less than $p$.

To estimate $\cI_{f}(\cA,\cB)$, we first use the classical Weil bound to estimate exponential sums, but the bound we obtain is nontrivial only for $s>r(1/2+\varepsilon)$, for some $\varepsilon>0$, and it also applies only for polynomials of degree less than $p$. 
However, applying Theorem~\ref{thm:charsumpoly} we obtain nontrivial estimates for any $s\ge \varepsilon r$, and moreover for more general polynomials of degree larger or equal to $p$, see Theorem~\ref{thm:fBG}. 

The bound of Theorem~\ref{thm:fBG} improves the very recent estimate~\eqref{eq:RNS} 
obtained in~\cite{RNS} for $s< 2.5\(\frac{5}{4}\)^dr\varepsilon$. Moreover, Theorem~\ref{thm:fBG} generalises the result of~\cite{RNS} as this holds only for polynomials of degree $d<p$.

We also conclude from Theorem~\ref{thm:fBG} that, under certain conditions, $f(\cA)$ is not included in any proper affine subspace $\cB$ of $\L$, see Corollary~\ref{cor:fBG}.

\subsection*{Polynomial orbits in affine subspaces}
Given a polynomial $f\in \L[X]$ and an 
element $u \in \L$, we define the orbit
\begin{equation}
\label{eq:Orbu}
\Orb_{f} (u)= \{f^{(n)}(u)\ : \ n = 0, 1,\ldots\},
\end{equation}
where $f^{(n)}$ is the $n$th iterate of $f$, that is,
$$
f^{(0)}=X,\quad f^{(n)}=f(f^{(n-1)}),\quad n\ge 1.
$$ 
As the orbit~\eqref{eq:Orbu} is a subset of $\L$, and thus a finite set,
we denote by $T_{f,u} = \# \Orb_{f} (u)$ 
to be the size of the orbit.

Here we study the frequency of orbit elements that fall in 
an affine subspace of $\L$ considered as a linear vector space over $\K$. 
This question is motivated by a recent work of 
Silverman and Viray~\cite{SiVi} (in characteristic zero 
and using a very different technique).  Recently, several results  have been obtained in~\cite{RNS} using additive combinatorics. Here we improve on several results of~\cite{RNS} and we also extend the class of polynomials to which these results apply, see Corollary~\ref{cor:consorb}.

We also note that the argument of the proof of~\cite[Theorem 6]{RNS} can give information about the frequency of  (not necessarily consecutive) iterates falling in a subspace. We present such a result in Theorem~\ref{thm:arbiter}, as well as apply it to obtain information about intersection of orbits of linearised polynomials in Corollary~\ref{cor:arbiterlin}.

\subsection*{Exponential sums over consecutive integers and the Waring problem}
For a positive integer $n\le p^r-1$, we consider the $p$-adic representation
$$
n=n_0+n_1p+\ldots+n_{s-1}p^{s-1}
$$
for some $s\le r$. 
Let $1\le N\le p^r-1$ and let $f\in\L[X]$ be a polynomial of degree $d$.
Furthermore, let 
$\psi$ be an additive character of $\L$ and let  $\chi:\N\to\C$ 
a $p$-multiplicative function, see Section~\ref{sec:exp consec int} 
for a definition. 
Another main result of this paper is to estimate, using  Theorem~\ref{thm:charsumpoly}, under certain conditions, 
the twisted exponential sum 
$$
S(N)=\sum_{n\le N}\chi(n) \psi(f(\xi_n)),
$$
where $\omega_0,\ldots,\omega_{r-1}$ is a basis of $\L$ over $\F_p$ and 
$$
\xi_n=\sum_{i=0}^{s-1}n_i\omega_i,
$$
which we hope to be of independent interest. 
We present such a result in Theorem~\ref{thm:expN} using the class of $p$-multiplicative functions
$$
\chi(n) = \exp\(2 \pi i \sum_{j=0}^{s-1}\alpha_j n_j\),
$$
where $\alpha_j$, $j=0,1, \ldots$, is a fixed infinite sequence
of real numbers.

Let $f\in\L[X]$ be a polynomial of degree $d$. As another direct consequence of Theorems~\ref{thm:charsumpoly} we prove the existence of a positive integer $k$ such that for any $y\in\L$, the equation 
$$
f(\xi_{n_1})+\ldots+f(\xi_{n_k})=y
$$
is solvable in positive integers $n_1,\ldots,n_k\le N$. We do this first for the case $N=q^s-1$ in Theorem~\ref{thm:warsubsp}, and conclude then Corollary~\ref{cor:War N} for the case $q^{s-1}\le N<q^s$. Recently, 
quite substantial progress has been achieved in the classical Waring problem in finite fields, see~\cite{Cip,CCP,CoPi,vsWoWi}.

We conclude the paper with some remarks and possible extensions of our results, as well as some connections to constructing affine dispersers.

\section{Consecutive differences of polynomials}

For our main results we need a few auxiliary results regarding consecutive differences of polynomials. For a polynomial $f\in\L[X]$ of degree $1\le d<p$ with leading coefficient $a_d$, we define
$$
\Delta_{X_1,X_2}(f)=f(X_1+X_2)-f(X_2)=X_1f(X_1,X_2),
$$
for some polynomial $f(X_1,X_2)\in\L[X_1,X_2]$ of degree 
$$\deg_{X_2}f(X_1,X_2)=d-1
$$ 
with leading coefficient $da_d$. Inductively, we define
\begin{equation}
\label{eq:Delta f}
\begin{split}
\Delta_{X_1,\ldots,X_{k}}&(f)\\
&=f(X_1,\ldots,X_{k-2},X_{k-1}+X_{k})-f(X_1,\ldots,X_{k-2},X_{k})\\
&=X_{k-1}f(X_1,\ldots,X_{k}),
\end{split}
\end{equation}
for some polynomial $f(X_1,\ldots,X_{k})\in\L[X_1,\ldots,X_{k}]$ of degree 
$$\deg_{X_{k}}f=d-(k-1)
$$ 
and leading coefficient with respect to $X_{k}$, $d(d-1)\ldots (d-(k-2))a_d$. We also have the following relation
\begin{equation}
\label{eq:delta k-1 k}
\Delta_{X_1,\ldots,X_{k}}(f)=\frac{1}{X_{k-2}}\(\Delta_{X_1,\ldots,X_{k-2},X_{k-1}+X_k}(f)-\Delta_{X_1,\ldots,X_{k-2},X_{k}}(f)\).
\end{equation}

We now give more details in the following straightforward statement which is 
well-known but is not readily available in the literature.

\begin{lem}
\label{lem:difdeg<p}
Let $f\in\L[X]$ be a polynomial of degree $d<p$ and leading coefficient $a_d\in\L^*$. Then
$$
\Delta_{X_1,\ldots,X_{k}}(f)=X_{k-1}f(X_1,\ldots,X_{k})
$$
where
\begin{equation}
\begin{split}
\label{eq:fk}
f(X_1&,\ldots,X_{k})\\
&=d(d-1)\ldots (d-k+2)a_dX_k^{d-k+1}+\widetilde{f}(X_1,\ldots,X_{k}),
\end{split} 
\end{equation}
for some polynomial $\widetilde{f}(X_1,\ldots,X_{k})\in\L[X_1,\ldots,X_{k}]$ of degrees 
$$\deg_{X_{i}}\widetilde{f}\le d-k+1, \ i=1,\ldots,k-1,\ \deg_{X_{k}}\widetilde{f}\le  d-k.$$ 
\end{lem}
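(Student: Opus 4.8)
The plan is to prove, by induction on $k$, a slightly stronger statement than the one displayed in \eqref{eq:fk}: that the $k$-variable polynomial $f(X_1,\ldots,X_k)$ appearing in \eqref{eq:Delta f} has \emph{total} degree exactly $d-k+1$, and that the coefficient of the monomial $X_k^{d-k+1}$ equals $d(d-1)\cdots(d-k+2)a_d$. Since $d<p$, each of the factors $d,d-1,\ldots,d-k+2$ is a nonzero element of $\F_p\subseteq\L$, so this leading coefficient is nonzero and the claimed total degree is genuinely attained. From this stronger statement I would recover the individual degree bounds of the lemma for free: total degree $d-k+1$ gives $\deg_{X_i}f(X_1,\ldots,X_k)\le d-k+1$ for every $i$, and the only monomial of total degree $d-k+1$ that is a pure power of $X_k$ is $X_k^{d-k+1}$ itself, so after subtracting the main term the remainder $\widetilde f$ satisfies $\deg_{X_k}\widetilde f\le d-k$, while still $\deg_{X_i}\widetilde f\le d-k+1$ for $i=1,\ldots,k-1$.

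For the base case $k=2$ I would expand $f(X_1+X_2)-f(X_2)=\sum_{j=0}^d a_j\big((X_1+X_2)^j-X_2^j\big)$ by the binomial theorem and divide by $X_1$; the part of highest total degree comes only from $j=d$, and picking out $X_2^{d-1}$ yields the coefficient $\binom{d}{1}a_d=da_d$, matching the formula.

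For the inductive step I would use the defining recursion \eqref{eq:Delta f} (equivalently \eqref{eq:delta k-1 k}), namely
$$
X_{k-1}f(X_1,\ldots,X_k)=f(X_1,\ldots,X_{k-2},X_{k-1}+X_k)-f(X_1,\ldots,X_{k-2},X_k),
$$
where on the right $f(X_1,\ldots,X_{k-1})$ is the $(k-1)$-variable polynomial supplied by the induction hypothesis, of total degree $d-k+2$ with $X_{k-1}^{d-k+2}$-coefficient $c=d(d-1)\cdots(d-k+3)a_d$. The substitution $X_{k-1}\mapsto X_{k-1}+X_k$ is linear, hence preserves total degree, so the right-hand side has total degree at most $d-k+2$ and, after division by $X_{k-1}$, the polynomial $f(X_1,\ldots,X_k)$ has total degree at most $d-k+1$. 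To find the coefficient of $X_k^{d-k+1}$ I only need the coefficient of $X_{k-1}X_k^{d-k+1}$ on the right, a monomial of top total degree $d-k+2$; among the degree-$(d-k+2)$ monomials of $f(X_1,\ldots,X_{k-1})$, the only one producing a term free of $X_1,\ldots,X_{k-2}$ after the substitution is $X_{k-1}^{d-k+2}$, and the expansion $(X_{k-1}+X_k)^{d-k+2}-X_k^{d-k+2}$ contributes $(d-k+2)X_{k-1}X_k^{d-k+1}$ to it. Hence the new coefficient is $c\,(d-k+2)=d(d-1)\cdots(d-k+2)a_d$, closing the induction.

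The step I expect to be the main obstacle — and the reason for strengthening the statement — is that the individual degree bounds do \emph{not} propagate on their own: the induction hypothesis gives only $\deg_{X_i}f(X_1,\ldots,X_{k-1})\le d-k+2$ for $i\le k-2$, which is one too large to conclude $\deg_{X_i}f(X_1,\ldots,X_k)\le d-k+1$ directly, since the differencing substitution never touches those variables. Passing to total degree resolves this cleanly, and it is precisely here that the hypothesis $d<p$ enters: it guarantees that the leading coefficient never vanishes modulo $p$, so the total degree drops by exactly one at each differencing step rather than collapsing, which is what keeps the coefficient bookkeeping valid throughout the induction.
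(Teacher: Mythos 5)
Your proof is correct, and while it follows the same skeleton as the paper's (induction on $k$ through the recursion $\Delta_{X_1,\ldots,X_k}(f)=f(X_1,\ldots,X_{k-2},X_{k-1}+X_k)-f(X_1,\ldots,X_{k-2},X_k)$, with the binomial theorem extracting the coefficient $d(d-1)\cdots(d-k+2)a_d$), you carry a genuinely different induction invariant: total degree plus the single coefficient of $X_k^{d-k+1}$, rather than the per-variable degree bounds. This is not just a stylistic choice. The paper's own induction tracks $\deg_{X_i}$ individually and at the key step asserts $\deg_{X_i}h\le d-k+1$ for $i\le k-2$, which does not follow from its stated hypothesis $\deg_{X_i}\widetilde f\le d-k+2$, since differencing in the last variable leaves the degrees in $X_1,\ldots,X_{k-2}$ untouched -- exactly the off-by-one obstruction you flag. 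Your strengthening to total degree repairs this cleanly: total degree drops by one at each step because the difference vanishes on $X_{k-1}=0$, and the per-variable bounds of the lemma, including $\deg_{X_k}\widetilde f\le d-k$, then fall out at the end since the only monomial of top total degree that is a pure power of $X_k$ is $X_k^{d-k+1}$. One small overstatement: the hypothesis $d<p$ is not what keeps the induction valid -- the upper bound on total degree and the coefficient recursion $c\mapsto c(d-k+2)$ go through even if some factor vanished; what $d<p$ buys is that the displayed leading coefficient is genuinely nonzero, which is what the lemma asserts and what Corollary~\ref{cor:difdeg<p} (where $d!a_d\ne 0$ is needed) relies on. This does not affect the correctness of your argument.
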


\begin{proof}
The result follows by induction over $k$. Easy computations prove the statement for $k=2$. We assume
it is true for $k-1$ and we prove the statement for $k$. Using the induction hypothesis, we have
\begin{equation}
\label{eq:Dk}
\begin{split}
\Delta&_{X_1,\ldots,X_{k}}(f)\\
&\quad =f(X_1,\ldots,X_{k-2},X_{k-1}+X_k)-f(X_1,\ldots,X_{k-2},X_k)\\
&\quad =d(d-1)\ldots (d-k+3)a_d(X_{k-1}+X_k)^{d-k+2}\\
&\qquad\qquad+\widetilde{f}(X_1,\ldots,X_{k-2},X_{k-1}+X_k)\\
&\qquad\qquad\qquad\quad-d(d-1)\ldots (d-k+3)a_dX_k^{d-k+2}\\
&\qquad\qquad\qquad\qquad\qquad\qquad\qquad-\widetilde{f}(X_1,\ldots,X_{k-2},X_k),
\end{split}
\end{equation}
where $\widetilde{f}(X_1,\ldots,X_{k-2},Y)\in\L[X_1,\ldots,X_{k-2},Y]$ is a polynomial of degrees $$\deg_{X_i}\widetilde{f}(X_1,\ldots,X_{k-2},Y)\le d-k+2,\quad i=1,\ldots,k-2,$$ and $$\deg_{Y}\widetilde{f}(X_1,\ldots,X_{k-2},Y)\le d-k+1.$$  
We  write
\begin{equation}
\label{eq:hk}
\begin{split}
\widetilde{f}(X_1,\ldots,X_{k-2},X_{k-1}+X_k)&=X_{k-1}h(X_1,\ldots,X_k)\\
&\qquad\qquad\quad+\widetilde{f}(X_1,\ldots,X_{k-2},X_k)
\end{split}
\end{equation}
where $h(X_1,\ldots,X_{k})\in\L[X_1,\ldots,X_{k}]$, and taking into account the degrees above, we get
$$\deg_{X_i}h(X_1,\ldots,X_k)\le d-k+1,\quad i=1,\ldots,k-2,$$ and $$\deg_{X_{i}}h(X_1,\ldots,X_k)\le d-k,\quad i=k-1,k.$$
Taking into account~\eqref{eq:Dk} and~\eqref{eq:hk}, and using the binomial expansion of $(X_{k-1}+X_k)^{d-k+2}$, we get
\begin{equation*}
\begin{split}
\Delta_{X_1,\ldots,X_{k}}(f)=X_{k-1}(d(d-1)\ldots (d-k+3)&(d-k+2)a_dX_k^{d-k+1}\\
&\qquad+\widetilde{f}(X_1,\ldots,X_k)),
\end{split}
\end{equation*}
where $\widetilde{f}(X_1,\ldots,X_k)\in\L[X_1,\ldots,X_k]$ is defined by
\begin{equation*}
\begin{split}
\widetilde{f}(X_1,\ldots&,X_k)=h(X_1,\ldots,X_k)\\
&+\frac{(X_{k-1}+X_k)^{d-k+2}-X_k^{d-k+2}-(d-k+2)X_{k-1}X_k^{d-k+1}}{X_{k-1}},
\end{split}
\end{equation*}
and thus satisfy the conditions
$$\deg_{X_{i}}\widetilde{f}\le d-k+1, \quad i=1,\ldots,k-1,\quad \deg_{X_{k}}\widetilde{f}\le d-k.$$ 
We thus  conclude the inductive step.
\end{proof}

If we take $k=d$ in Lemma~\ref{lem:difdeg<p}, 
and then two more consecutive differences, that is, $k=d+1$ and $k=d+2$, we obtain the following consequence.

\begin{cor}
\label{cor:difdeg<p}
We have
$$
\Delta_{X_1,\ldots,X_d}(f)=X_{d-1}\(d!a_dX_d+\widetilde{f}(X_1,\ldots,X_{d-1})\),
$$
where $\widetilde{f}(X_1,\ldots,X_{d-1})\in\L[X_1,\ldots,X_{d-1}]$ is of degrees $$\deg_{X_i} \widetilde{f}(X_1,\ldots,X_{d-1})\le 1, \qquad i=1,\ldots,d-1,
$$ 
and thus
$$
\Delta_{X_1,\ldots,X_d,X_{d+1}}(f)=d!a_dX_d, \ \Delta_{X_1,\ldots,X_{d+1},X_{d+2}}(f)=0.
$$
\end{cor}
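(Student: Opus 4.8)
The plan is to read off the first identity by directly specialising Lemma~\ref{lem:difdeg<p} at $k=d$, and then to obtain the remaining two identities by applying the differencing operator~\eqref{eq:Delta f} twice more to the resulting explicit reduced polynomial.

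First I would set $k=d$ in~\eqref{eq:fk}. The leading coefficient $d(d-1)\cdots(d-k+2)a_d$ becomes $d(d-1)\cdots 2\, a_d=d!\,a_d$, while the exponent $d-k+1$ of $X_k$ drops to $1$, so the displayed term is $d!\,a_d X_d$. Since $d<p$, the factorial $d!$ is nonzero in $\L$, so this term genuinely survives. The degree bounds in Lemma~\ref{lem:difdeg<p} specialise to $\deg_{X_i}\widetilde f\le d-k+1=1$ for $i=1,\ldots,d-1$ and $\deg_{X_d}\widetilde f\le d-k=0$; the latter forces $\widetilde f$ to be free of $X_d$, i.e. $\widetilde f=\widetilde f(X_1,\ldots,X_{d-1})$ with each variable occurring to degree at most $1$. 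This is exactly the first displayed identity, with reduced polynomial $F_d(X_1,\ldots,X_d)=d!\,a_d X_d+\widetilde f(X_1,\ldots,X_{d-1})$.

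Next I would compute $\Delta_{X_1,\ldots,X_{d+1}}(f)$ from the recursion~\eqref{eq:Delta f} with $k=d+1$, which differences $F_d$ in its last slot by substituting $X_d\mapsto X_d+X_{d+1}$ against $X_d\mapsto X_{d+1}$. The summand $\widetilde f(X_1,\ldots,X_{d-1})$ does not involve $X_d$, so it is unchanged by both substitutions and cancels; the summand $d!\,a_d X_d$ contributes $d!\,a_d(X_d+X_{d+1})-d!\,a_d X_{d+1}=d!\,a_d X_d$. Hence $\Delta_{X_1,\ldots,X_d,X_{d+1}}(f)=d!\,a_d X_d$, and the corresponding reduced polynomial $F_{d+1}=d!\,a_d$ is a (nonzero) constant. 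Applying~\eqref{eq:Delta f} once more with $k=d+2$ differences this constant in the variable $X_{d+1}$, and since $F_{d+1}$ is free of $X_{d+1}$ the two substitutions agree and their difference vanishes, giving $\Delta_{X_1,\ldots,X_{d+1},X_{d+2}}(f)=0$.

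There is no genuine obstacle here: the statement is a direct specialisation of Lemma~\ref{lem:difdeg<p} followed by two transparent differencings. The only point demanding care is the bookkeeping---tracking which variable the recursion~\eqref{eq:Delta f} acts on at each stage and confirming at each step that the remainder term is independent of that variable, which is precisely what makes it cancel. One could equally carry out the last two steps via the relation~\eqref{eq:delta k-1 k}, but working directly from the definition~\eqref{eq:Delta f} keeps the cancellations most visible.
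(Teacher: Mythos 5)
Your proposal is correct and follows exactly the paper's intended route: the paper states Corollary~\ref{cor:difdeg<p} as an immediate consequence of setting $k=d$ in Lemma~\ref{lem:difdeg<p} and then taking two more differences, which is precisely your specialisation (giving the coefficient $d!a_d$, exponent $1$, and a remainder free of $X_d$) followed by the two transparent cancellations. No gaps.
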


\begin{lem}
\label{lem:xpg}
Let $\nu\ge 1$ and $f=X^{p^{\nu}}g\in\L[X]$, where $g\in\L[X]$ is of degree $d<p$. Then
$$
\Delta_{X_1,\ldots,X_{d+3}}(f)=0.
$$
\end{lem}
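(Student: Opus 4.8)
The plan is to recast the normalised iterated operator $\Delta_{X_1,\dots,X_k}$ as an ordinary mixed finite difference, after which the conclusion becomes a two-line consequence of the additivity of the Frobenius-type factor $X^{p^\nu}$. For a one-variable polynomial $h(Y)$, write $E_i$ for the shift $h(Y)\mapsto h(Y+X_i)$ and set $\Delta_i=E_i-I$; these commute. Let $D_m h\,(X_1,\dots,X_m;Y)=\prod_{i=1}^m\Delta_i\,h$ be the $m$-th mixed difference with steps $X_1,\dots,X_m$. First I would prove, by induction on $k$ matching the recursion~\eqref{eq:Delta f}, the identity
$$
\Delta_{X_1,\dots,X_k}(f)=\frac{D_{k-1}f\,(X_1,\dots,X_{k-1};X_k)}{X_1X_2\cdots X_{k-2}},
$$
the point being that each step of~\eqref{eq:Delta f} applies one factor $E_{X_{k-1}}-I$ to the previous quotient (in its last slot, the new variable $X_k$ becoming the base point) and then divides by the step variable. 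As a sanity check this reproduces Corollary~\ref{cor:difdeg<p}: for $h=g$ of degree $d$ one has $D_d g=d!\,a_d X_1\cdots X_d$, whence $\Delta_{X_1,\dots,X_{d+1}}(g)=d!\,a_dX_d$, while $D_{d+1}g=0$. In particular $\Delta_{X_1,\dots,X_{d+3}}(f)=0$ will follow as soon as the numerator $D_{d+2}f=0$, since the intermediate divisions are exact.

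Next I would invoke the Leibniz rule for these commuting operators: for $f=uv$,
$$
D_m(uv)=\sum_{A\subseteq\{1,\dots,m\}}\bigl(D_A u\bigr)\,\bigl(E_A\,D_{A^c}v\bigr),
$$
where $D_A=\prod_{i\in A}\Delta_i$, $E_A=\prod_{i\in A}E_i$ shifts the base point by $\sum_{i\in A}X_i$, and $A^c=\{1,\dots,m\}\setminus A$; this is a short induction on $m$. I apply it with $m=d+2$, $u=X^{p^\nu}$ and $v=g$. Two elementary facts then do all the work. First, $X^{p^\nu}$ is additive over $\bFp$, so $\Delta_i(X^{p^\nu})=(Y+X_i)^{p^\nu}-Y^{p^\nu}=X_i^{p^\nu}$ is constant in $Y$, and applying a second difference annihilates it; hence $D_A(X^{p^\nu})=0$ whenever $|A|\ge 2$. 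Second, since $\deg g=d$ each $\Delta_i$ lowers the $Y$-degree, so $D_B g=0$ for every $B$ with $|B|>d$.

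Finally I would combine these: in the Leibniz expansion with $m=d+2$, a term can survive only if $|A|\le 1$ (otherwise the factor $D_A(X^{p^\nu})$ vanishes), but then $|A^c|\ge d+1>d$ forces $D_{A^c}g=0$. Thus every term vanishes, giving $D_{d+2}f=0$ and hence $\Delta_{X_1,\dots,X_{d+3}}(f)=0$. I expect the only delicate point to be the bookkeeping in the first step, namely verifying that the paper's recursive, division-normalised definition of $\Delta_{X_1,\dots,X_k}$ really is $D_{k-1}f$ divided by $X_1\cdots X_{k-2}$ — keeping straight which variable is the base point and which are the steps, and confirming the intermediate divisions stay exact so that a vanishing numerator forces a vanishing $\Delta$. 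Everything after that reduction is immediate from Frobenius additivity and the degree bound on $g$.
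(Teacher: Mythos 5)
Your proof is correct, but it reaches the conclusion by a genuinely different route from the paper. The paper argues by induction on $k$, establishing the explicit identity \eqref{eq:fik}, which writes $\Delta_{X_1,\ldots,X_k}(X^{p^\nu}g)$ as a combination of $g(X_1,\ldots,X_k)$ and the deleted-variable differences $g(X_1,\ldots,X_{j-1},X_{j+1},\ldots,X_k)$, weighted by $\sum_j X_j^{p^\nu}$ and by $X_j^{p^\nu-1}$ respectively, and then invokes Corollary~\ref{cor:difdeg<p} to see that every ingredient vanishes once $d+2$ or $d+3$ differences have been taken. You instead collapse the recursive, division-normalised operator into a single classical mixed finite difference via $\Delta_{X_1,\ldots,X_k}(f)=D_{k-1}f(X_1,\ldots,X_{k-1};X_k)/(X_1\cdots X_{k-2})$ --- an identity that does follow by a clean induction from \eqref{eq:delta k-1 k} --- and then dispatch the product $X^{p^\nu}\cdot g$ with the discrete Leibniz rule: $D_A(Y^{p^\nu})=0$ for $\#A\ge 2$ by Frobenius additivity, and $D_Bg=0$ for $\#B\ge d+1$ by degree count, so every term of $D_{d+2}f$ dies. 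Your route buys a shorter and more conceptual argument: it avoids the bookkeeping of \eqref{eq:fik}, makes the count $d+3$ transparent (one index shift in the paper's convention, one difference absorbed by the single surviving factor $\Delta_i(Y^{p^\nu})=X_i^{p^\nu}$, and $d+1$ differences to annihilate $g$), and generalises painlessly to products with several additive factors; it also never uses $d<p$, which the paper needs elsewhere but not for this vanishing statement. What the paper's computation produces in addition is the intermediate formula \eqref{eq:fik} itself, though that formula is not reused outside the proof. The one point you rightly flag and must not skip is the exactness of the divisions: the identity reads $X_1\cdots X_{k-2}\cdot\Delta_{X_1,\ldots,X_k}(f)=D_{k-1}f$ with $\Delta_{X_1,\ldots,X_k}(f)$ a genuine polynomial by the definition \eqref{eq:Delta f}, so a vanishing numerator forces a vanishing quotient because $\L[X_1,\ldots,X_k]$ is an integral domain. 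With that checked, your sanity check against Corollary~\ref{cor:difdeg<p} (namely $D_dg=d!\,a_dX_1\cdots X_d$, hence $\Delta_{X_1,\ldots,X_{d+1}}(g)=d!\,a_dX_d$) comes out right, and the argument is complete.
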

\begin{proof}
We prove by induction over $k\ge 2$ that
\begin{equation}
\label{eq:fik}
\begin{split}
\Delta&_{X_1,\ldots,X_{k}}(f)=X_{k-1}\(\sum_{j=1}^{k}X_j^{p^{\nu}}\)g(X_1,\ldots,X_{k})\\
&\qquad+X_{k-1}\sum_{j=1}^{k-1} X_j^{p^{\nu}-1}g(X_1,\ldots,X_{j-1},X_{j+1},\ldots,X_{k-1},X_{k}),
\end{split}
\end{equation}
where $g(X_1,\ldots,X_k)$ and $g(X_1,\ldots,X_{j-1},X_{j+1},\ldots,X_{k-1},X_{k})$ are defined by~\eqref{eq:Delta f} and~\eqref{eq:fk}.

For $k=2$, the computations follow exactly as for the general case, so to avoid repetition we prove only the induction step from $k-1$ to $k$. Using~\eqref{eq:Delta f},~\eqref{eq:delta k-1 k} and the induction step, we have
\begin{equation*}
\begin{split}
&\Delta_{X_1,\ldots,X_{k}}(f)=f(X_1,\ldots,X_{k-2},X_{k-1}+X_{k})-f(X_1,\ldots,X_{k-2},X_{k})\\
&\qquad\qquad=\frac{1}{X_{k-2}}\(\Delta_{X_1,\ldots,X_{k-2},X_{k-1}+X_{k}}(f)-\Delta_{X_1,\ldots,X_{k-2},X_{k}}(f)\)\\
&\qquad\qquad=\(\sum_{j=1}^{k-1}X_j^{p^{\nu}}+X_{k}^{p^{\nu}}\)g(X_1,\ldots,X_{k-2},X_{k-1}+X_{k})\\
&\qquad\qquad\qquad+\sum_{j=1}^{k-2} X_j^{p^{\nu}-1}g(X_1,\ldots,X_{j-1},X_{j+1},\ldots,X_{k-2},X_{k-1}+X_{k})\\
&\qquad\qquad\qquad\qquad-\(\sum_{j=1}^{k-2}X_j^{p^{\nu}}+X_{k}^{p^{\nu}}\)g(X_1,\ldots,X_{k-2},X_{k})\\
&\qquad\qquad\qquad\qquad-\sum_{j=1}^{k-2} X_j^{p^{\nu}-1}g(X_1,\ldots,X_{j-1},X_{j+1},\ldots,X_{k-2},X_{k}).
\end{split}
\end{equation*}
Writing now, as in Lemma~\ref{lem:difdeg<p} (applied to $g$ in place of $f$),
\begin{equation*}
\begin{split}
g(X_1,\ldots,&X_{j-1},X_{j+1},\ldots,X_{k-2},X_{k-1}+X_{k})\\
&\qquad=X_{k-1}g(X_1,\ldots,X_{j-1},X_{j+1},\ldots,X_{k})\\
&\qquad\qquad\qquad\qquad+g(X_1,\ldots,X_{j-1},X_{j+1},\ldots,X_{k-2},X_{k}),
\end{split}
\end{equation*}
and the same for $g(X_1,\ldots,X_{k-2},X_{k-1}+X_{k})$, and making simple computations, 
we get the equation~\eqref{eq:fik}. In fact, using Lemma~\ref{lem:difdeg<p}, 
and the fact that, by~\eqref{eq:Delta f} we have
$$
\Delta_{X_1,\ldots,X_{k}}(g)=X_{k-1}g(X_1,\ldots,X_{k})
$$
and
\begin{equation*}
\begin{split}
\Delta_{X_1,\ldots,X_{j-1},X_{j+1},\ldots,X_{k-1},X_{k}}&(g)\\
&=X_{k-1}g(X_1,\ldots,X_{j-1},X_{j+1},\ldots,X_{k-1},X_{k}),
\end{split}
\end{equation*}
one can rewrite the equation~\eqref{eq:fik} as follows
\begin{equation*}
\begin{split}
\Delta_{X_1,\ldots,X_{k}}&(f)=\Delta_{X_1,\ldots,X_k}(g)\(\sum_{j=1}^{k}X_j^{p^{\nu}}\)\\
&\qquad\qquad+\sum_{j=1}^{k-2} X_j^{p^{\nu}-1}\Delta_{X_1,\ldots,X_{j-1},X_{j+1},\ldots,X_{k-1},X_{k}}(g)\\
&\qquad\qquad\qquad\qquad\qquad+X_{k-1}^{p^{\nu}}g(X_1,\ldots,X_{k-2},X_{k}).
\end{split}
\end{equation*}

By Corollary~\ref{cor:difdeg<p} we have
$$
\Delta_{X_1,\ldots,X_{d+1},X_{d+2}}(g)=0,  \qquad \Delta_{X_1,\ldots,X_{j-1},X_{j+1},\ldots,X_{d+2},X_{d+3}}(g)=0, 
$$
and similarly $\Delta_{X_1,\ldots,X_{d+1},X_{d+3}}(g)=0$, and thus,
$$
g(X_1,\ldots,X_{d+1},X_{d+3})=0,
$$
and thus we conclude the proof.
\end{proof}

As a direct application of Lemma~\ref{lem:xpg}, we obtain the following more general result.
\begin{cor}
\label{cor:xpf}
Let $$f=X^{p^{\nu}}g_{\nu}+\ldots+X^pg_1+g_0,\ g_i\in\L[X],\ i=0\ldots,\nu,
$$
with
$$
\deg g_i+3\le \deg g_0=d<p,\ i=1\ldots,\nu,$$
and with $g_0$ having leading coefficient $a_d$. Then, 
$$
\Delta_{X_1,\ldots,X_d}(f)=X_{d-1}\(d!a_dX_d+\widetilde{f}(X_1,\ldots,X_{d-1})\),
$$
where $\widetilde{f}(X_1,\ldots,X_{d-1})\in\L[X_1,\ldots,X_{d-1}]$ is of degrees $$\deg_{X_i} \widetilde{f}(X_1,\ldots,X_{d-1})\le 1,$$ $i=1,\ldots,d-1$.
\end{cor}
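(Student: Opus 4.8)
The plan is to reduce Corollary~\ref{cor:xpf} to the results already established for the summands. Write $f=\sum_{i=0}^{\nu} X^{p^i} g_i$ with $g_0$ of degree $d<p$ and $\deg g_i\le d-3$ for $i\ge 1$. Since the differencing operators $\Delta_{X_1,\ldots,X_k}$ are additive in $f$ (each $\Delta$ is built from differences of shifts of $f$, hence $\Delta_{X_1,\ldots,X_k}(f_1+f_2)=\Delta_{X_1,\ldots,X_k}(f_1)+\Delta_{X_1,\ldots,X_k}(f_2)$), I can apply $\Delta_{X_1,\ldots,X_d}$ term by term. For the genuine polynomial part $g_0$ of degree $d$, Corollary~\ref{cor:difdeg<p} gives exactly
$$
\Delta_{X_1,\ldots,X_d}(g_0)=X_{d-1}\(d!\,a_d X_d+\widetilde{g_0}(X_1,\ldots,X_{d-1})\),
$$
which is precisely the claimed main term together with an error of degree at most $1$ in each of $X_1,\ldots,X_{d-1}$.

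It then remains to show each remaining summand $X^{p^i} g_i$ contributes nothing after $d$ differencings. Here I would invoke Lemma~\ref{lem:xpg}: that lemma states that for $h=X^{p^{\nu}} g$ with $\deg g=e<p$ one has $\Delta_{X_1,\ldots,X_{e+3}}(h)=0$, i.e.\ $e+3$ differences annihilate $h$. In our setting each term $X^{p^i} g_i$ has the form covered by Lemma~\ref{lem:xpg} with $g=g_i$ of degree $\deg g_i\le d-3$, so it is killed by $\deg g_i+3\le d$ differencings. Since applying one extra differencing operator to the zero polynomial again yields zero, $\Delta_{X_1,\ldots,X_d}(X^{p^i}g_i)=0$ for every $i=1,\ldots,\nu$. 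Summing, only the $g_0$-contribution survives, giving the stated formula.

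The one point requiring a little care is matching the number of differencings to the degree hypothesis. Lemma~\ref{lem:xpg} as stated handles $g$ of degree $d'<p$ using $d'+3$ differences; I should check that the hypothesis $\deg g_i+3\le d$ guarantees $\deg g_i+3$ differences (hence also the full $d$ differences) annihilate $X^{p^i}g_i$, and that the same argument goes through for each exponent $p^i$ with $i\le\nu$ rather than only the top exponent $p^{\nu}$—the proof of Lemma~\ref{lem:xpg} depends only on the Frobenius-type exponent structure and the degree of $g$, not on the specific value of $\nu$, so it applies verbatim with $\nu$ replaced by $i$. This bookkeeping, together with the additivity of $\Delta$, is the whole content of the argument; there is no essential new obstacle, since the genuinely delicate combinatorial work was already carried out in Lemma~\ref{lem:difdeg<p} and Lemma~\ref{lem:xpg}.
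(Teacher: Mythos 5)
Your argument is correct and is essentially the paper's own proof: split $f$ by additivity of $\Delta$, kill each $X^{p^i}g_i$ via Lemma~\ref{lem:xpg} using $\deg g_i+3\le d$, and apply Corollary~\ref{cor:difdeg<p} to the surviving term $g_0$. The extra bookkeeping you flag (differencing zero gives zero, and the lemma applying with exponent $p^i$ in place of $p^{\nu}$) is implicit in the paper and correctly handled.
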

\begin{proof}
The proof follows directly from Lemma~\ref{lem:xpg}. Indeed, we have
\begin{equation*}
\begin{split}
\Delta_{X_1,\ldots,X_{k}}(f)&=\sum_{i=1}^{\nu}\Delta_{X_1,\ldots,X_{k}}\(X^{p^i}g_i\)+\Delta_{X_1,\ldots,X_{k}}(g_0).
\end{split}
\end{equation*}
As $d\ge \deg g_i+3$, from Lemma~\ref{lem:xpg} 
we obtain
$$
\Delta_{X_1,\ldots,X_{d}}\(X^{p^i}g_i\)=0,
$$
and thus, 
$$
\Delta_{X_1,\ldots,X_{d}}(f)=\Delta_{X_1,\ldots,X_{d}}(g_0).
$$
Applying now Corollary~\ref{cor:difdeg<p}, we conclude the proof.
\end{proof}

\begin{remark}
We note that when $g_i$, $i=1,\ldots,\nu$, are constant polynomials in Corollary~\ref{cor:xpf}, we are in the case
$$f=c_{\nu}X^{p^{\nu}}+\ldots+c_1X^p+g_0,\ g_i\in\L[X],\ c_i\in\L, \ i=1\ldots,\nu,
$$
with
$$
3\le \deg g_0=d<p.$$
Then, we need to take only two differences to eliminate the power of $p$ monomials, that is, we have
$$
\Delta_{X_1,X_2,X_3}=X_2g_0(X_1,X_2,X_3),
$$
where $g_0(X_1,X_2,X_3)\in\L[X_1,X_2,X_3]$ with $\deg_{X_3}g_0(X_1,X_2,X_3)=d-2$.
\end{remark}

\begin{lem}
\label{lem:p g}
Let 
$$f=X^{p^{\nu}+p^{\nu-1}+\ldots+p+1}+g\in\L[X],$$ where $g\in\L[X]$ is a polynomial of degree $4\le d<p$ with leading coefficient $a_d$. Then
$$
\Delta_{X_1,\ldots,X_{d}}(f)=X_{d-1}\(d!a_dX_d+\widetilde{f}(X_1,\ldots,X_{d-1})\),
$$
where $\widetilde{f}(X_1,\ldots,X_{d-1})\in\L[X_1,\ldots,X_{d-1}]$ is of degrees $$\deg_{X_i} \widetilde{f}(X_1,\ldots,X_{d-1})\le 1,$$ $i=1,\ldots,d-1$.
\end{lem}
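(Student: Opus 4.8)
The plan is to use the $\L$-linearity of the operator $\Delta_{X_1,\ldots,X_d}$ to peel off the single top-degree monomial $X^{M}$, where $M=p^{\nu}+p^{\nu-1}+\ldots+p+1$, and to treat the remaining polynomial $g$ by the machinery already in place. Linearity is immediate from the recursion~\eqref{eq:delta k-1 k}: each stage is a difference followed by an exact division by $X_{k-2}$, both linear on the relevant spaces, so $f\mapsto\Delta_{X_1,\ldots,X_k}(f)$ is linear for every $k$. Consequently
$$
\Delta_{X_1,\ldots,X_d}(f)=\Delta_{X_1,\ldots,X_d}\(X^{M}\)+\Delta_{X_1,\ldots,X_d}(g),
$$
and, since $\deg g=d<p$, Corollary~\ref{cor:difdeg<p} applied to $g$ already yields $\Delta_{X_1,\ldots,X_d}(g)=X_{d-1}\(d!a_dX_d+\widetilde g(X_1,\ldots,X_{d-1})\)$ with $\deg_{X_i}\widetilde g\le1$ for $i=1,\ldots,d-1$. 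Thus the whole statement reduces to the single claim that the top monomial is annihilated, $\Delta_{X_1,\ldots,X_d}\(X^{M}\)=0$; granting this one takes $\widetilde f=\widetilde g$ and reads off the asserted form, the only $X_d$-dependence being the term $d!a_dX_d$ inherited from $g$.

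Everything therefore rests on the vanishing $\Delta_{X_1,\ldots,X_d}\(X^{M}\)=0$, and the mechanism is that each factor of $X^{M}=X\cdot X^{p}\cdots X^{p^{\nu}}$ is additive, $(Y+Z)^{p^{i}}=Y^{p^{i}}+Z^{p^{i}}$. For $\nu=1$ the statement is literally the case of Corollary~\ref{cor:xpf} in which only the term $X^{p}g_1$ is present, with $g_1=X$ (of degree $1$) and $g_0=g$; the hypothesis $4\le d$ is exactly the requirement $\deg g_1+3\le d$ there, so that corollary gives the result directly. For general $\nu$ I would imitate Lemma~\ref{lem:xpg}: iterating~\eqref{eq:delta k-1 k} expresses $X_1\cdots X_{k-2}\,\Delta_{X_1,\ldots,X_k}\(X^{M}\)$ as the mixed difference $\sum_{T\subseteq\{1,\ldots,k-1\}}(-1)^{k-1-|T|}\(X_k+\sum_{j\in T}X_j\)^{M}$, and expanding every $\(X_k+\sum_{j\in T}X_j\)^{p^{i}}$ by additivity turns this into a signed sum of monomials $\prod_{i=0}^{\nu}X_{\phi(i)}^{p^{i}}$ indexed by maps $\phi\colon\{0,\ldots,\nu\}\to\{1,\ldots,k\}$. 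By uniqueness of base-$p$ expansions these monomials are pairwise distinct, and the signed coefficient attached to a given one collapses, via $\sum_{s}\binom{m}{s}(-1)^{m-s}=0$, unless each of the $k-1$ variables $X_1,\ldots,X_{k-1}$ occurs among the values $X_{\phi(0)},\ldots,X_{\phi(\nu)}$. Only $\nu+1$ such values are available, so no admissible $\phi$ exists once $k-1>\nu+1$; hence $\Delta_{X_1,\ldots,X_k}\(X^{M}\)=0$ for every $k\ge\nu+3$, and since the operator sends the zero polynomial to zero at each later stage this gives $\Delta_{X_1,\ldots,X_d}\(X^{M}\)=0$ as soon as $d\ge\nu+3$ (which for $\nu=1$ is the stated bound $d\ge4$).

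The step I expect to be the real obstacle is this last vanishing: one must verify honestly that after the division by $X_1\cdots X_{k-2}$ no monomial of positive degree survives, i.e. that the cancellation is genuinely governed by the alternating sum over the subsets $T$ containing the used variables. The cleanest way to make this rigorous, matching the bookkeeping of Lemma~\ref{lem:xpg}, is to prove by induction on $k$ an explicit closed form for $\Delta_{X_1,\ldots,X_k}\(X^{M}\)$ as such a signed sum of products of Frobenius powers and then apply the binomial identity above to collapse it. The argument also makes transparent the precise regime in which the top monomial disappears before the $d$-th difference, namely $d\ge\nu+3$; this is exactly what is needed, and for the distinguished case $\nu=1$ it coincides with the hypothesis $4\le d$. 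With the vanishing established, combining it with Corollary~\ref{cor:difdeg<p} as in the first paragraph completes the proof.
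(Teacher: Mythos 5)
Your overall strategy --- split $f=X^{M}+g$ by linearity of $\Delta$, handle $g$ with Corollary~\ref{cor:difdeg<p}, and reduce everything to the vanishing of $\Delta_{X_1,\ldots,X_d}(X^{M})$ --- is the same skeleton as the paper's proof (which likewise ends with ``$\Delta_{X_1,X_2,X_3,X_4}(f)=\Delta_{X_1,X_2,X_3,X_4}(g)$'' and then invokes Lemma~\ref{lem:difdeg<p}), and your reduction of the case $\nu=1$ to Corollary~\ref{cor:xpf} with $g_1=X$ is exactly the paper's. Where you differ is in how the vanishing of the differenced monomial is established: the paper computes $\Delta_{X_1,X_2}(m)$, $\Delta_{X_1,X_2,X_3}(m)$, $\Delta_{X_1,X_2,X_3,X_4}(m)$ step by step and asserts the last one is zero for every $\nu$, whereas you use the closed form
$$
X_1\cdots X_{k-2}\,\Delta_{X_1,\ldots,X_k}(X^{M})=\sum_{T\subseteq\{1,\ldots,k-1\}}(-1)^{k-1-|T|}\Bigl(X_k+\sum_{j\in T}X_j\Bigr)^{M}
$$
together with the expansion over maps $\phi:\{0,\ldots,\nu\}\to\{1,\ldots,k\}$ and the identity $(1-1)^{k-1-|S|}=0$. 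That argument is correct as you sketch it (the monomials $\prod_i X_{\phi(i)}^{p^i}$ are pairwise distinct by uniqueness of base-$p$ expansions, so the coefficient count is legitimate, and one may divide by $X_1\cdots X_{k-2}$ because the polynomial ring is a domain), and it yields $\Delta_{X_1,\ldots,X_k}(X^{M})=0$ precisely for $k\ge\nu+3$.

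The substantive point is the mismatch you half-acknowledge at the end: you prove the lemma only under $d\ge\nu+3$, while the statement allows arbitrary $\nu$ with the sole hypothesis $4\le d<p$. This is not a repairable gap in your argument --- it is a defect of the lemma itself, which your own formula detects. For $k=4$ and $\nu=2$ the surviving maps are the $3!$ bijections $\phi:\{0,1,2\}\to\{1,2,3\}$, each with coefficient $1$, so $X_1X_2\,\Delta_{X_1,X_2,X_3,X_4}(X^{p^2+p+1})=\sum_{\sigma\in S_3}X_{\sigma(1)}X_{\sigma(2)}^{p}X_{\sigma(3)}^{p^2}\ne0$; a direct check gives $\Delta_{X_1,X_2,X_3,X_4}(m)=X_3\bigl(X_1^{p^2-1}X_2^{p-1}+X_1^{p-1}X_2^{p^2-1}+X_3^{p-1}(X_1^{p^2-1}+X_2^{p^2-1})+X_3^{p^2-1}(X_1^{p-1}+X_2^{p-1})\bigr)$, whose contribution to $\widetilde{f}$ has $X_i$-degrees of order $p^2$, destroying the asserted bound $\deg_{X_i}\widetilde{f}\le1$. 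The paper's proof goes wrong at its first step: the formula $m(X_1,X_2)=X_1^{P}+X_2^{P}+X_2X_1^{P-1}$ with $P=p^{\nu}+\cdots+p$ is the $\nu=1$ computation; for $\nu\ge2$ the cross terms of $\prod_{i=1}^{\nu}(X_1^{p^i}+X_2^{p^i})$ are dropped, and the claimed vanishing after three differences fails. So your proof is the correct one, but you should state explicitly that it establishes the lemma only when $d\ge\nu+3$, and that the statement as printed is false for $\nu\ge2$ and $4\le d\le\nu+2$; this restriction then has to be carried into case (ii) of Theorem~\ref{thm:charsumpoly}.
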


\begin{proof}
The case $\nu=1$ is a special case of  Corollary~\ref{cor:xpf}. The proof for $\nu>1$ follows also the same as the proof of Lemma~\ref{lem:xpg}, only that one needs to take three differences to eliminate all powers of $p$ in the degree and get a polynomial of degree less then $p$. Indeed, we denote the monomial
$$
m=X^{p^{\nu}+p^{\nu-1}+\ldots+p+1}.
$$

Simple computations show that, as in Lemma~\ref{lem:xpg}, we have
$$
\Delta_{X_1,X_2}(m)=X_1m(X_1,X_2),
$$
where
$$
m(X_1,X_2)=X_1^{p^{\nu+\ldots+p}}+X_2^{p^{\nu+\ldots+p}}+X_2X_1^{p^{\nu+\ldots+p}-1}.
$$

Next, we have
$$
\Delta_{X_1,X_2,X_3}(m)=X_2m(X_1,X_2,X_3),
$$
where 
$$
m(X_1,X_2,X_3)=X_1^{p^{\nu+\ldots+p}-1}+X_2^{p^{\nu+\ldots+p}-1}.
$$
At the next step we already get
$$
\Delta_{X_1,X_2,X_3,X_4}(m)=0.
$$
Thus, as $d\ge 4, $we have
$$
\Delta_{X_1,X_2,X_3,X_4}(f)=\Delta_{X_1,X_2,X_3,X_4}(g), \ \deg_{X_4} \Delta_{X_1,X_2,X_3,X_4}(g)=d-3\ge 1,
$$
and the result follows by applying Lemma~\ref{lem:difdeg<p}.
\end{proof}

\begin{lem}
\label{lem:g(l)}
Let $f=g(l(x))\in\L[X]$,  
where $g\in\L[X]$ is a polynomial of degree $d<p$ with leading coefficient $a_d$, and
\begin{equation}
\label{eq:lin}
 l=\sum_{i=1}^{\nu}b_iX^{p^i}\in\L[X],\ p^{\nu}<q^r,
 \end{equation}
is a $p$-polynomial polynomial. Then
$$
\Delta_{X_1,\ldots,X_{d}}(f)=l(X_{d-1})\(d!a_dl(X_d)+\widetilde{f}\(l(X_1),\ldots,l(X_{d-1})\)\),
$$
where $\widetilde{f}(X_1,\ldots,X_{d-1})\in\L[X_1,\ldots,X_{d-1}]$ is of degrees $$\deg_{X_i} \widetilde{f}(X_1,\ldots,X_{d-1})\le 1,$$ $i=1,\ldots,d-1$.
\end{lem}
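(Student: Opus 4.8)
The engine of the proof is the additivity of the $p$-polynomial $l$: since $(X+Y)^{p^i}=X^{p^i}+Y^{p^i}$ in characteristic $p$, we have $l(X+Y)=l(X)+l(Y)$. Thus replacing a variable $X$ by a sum $X+Y$ inside $f=g(l(X))$ amounts to replacing $l(X)$ by $l(X)+l(Y)$ inside $g$. The plan is to use this to transport the entire differencing computation for $f$ onto $g$ under the substitution $Y_i\mapsto l(X_i)$, and then to quote Corollary~\ref{cor:difdeg<p}, which already records the shape of $\Delta_{Y_1,\ldots,Y_d}(g)$ for the polynomial $g$ of degree $d<p$.

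First I would treat the base case $k=2$ directly:
$$
\Delta_{X_1,X_2}(f)=g(l(X_1+X_2))-g(l(X_2))=g(l(X_1)+l(X_2))-g(l(X_2)),
$$
which is exactly $\Delta_{Y_1,Y_2}(g)$ evaluated at $Y_1=l(X_1)$, $Y_2=l(X_2)$. For the inductive step I would argue as in Lemma~\ref{lem:xpg}, using the recursion~\eqref{eq:delta k-1 k} together with~\eqref{eq:Delta f}: the splitting $X_{k-1}\mapsto X_{k-1}+X_k$ prescribed by the differencing produces, through additivity of $l$, precisely the splitting $l(X_{k-1})\mapsto l(X_{k-1})+l(X_k)$ inside $g$. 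Carrying this induction up to $k=d$ reduces $\Delta_{X_1,\ldots,X_d}(f)$ to the corresponding difference of $g$ with all arguments passed through $l$.

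It then remains to insert the explicit form furnished by Corollary~\ref{cor:difdeg<p}, namely $\Delta_{Y_1,\ldots,Y_d}(g)=Y_{d-1}\(d!a_dY_d+\widetilde{g}(Y_1,\ldots,Y_{d-1})\)$ with $\deg_{Y_i}\widetilde{g}\le 1$. Substituting $Y_i=l(X_i)$ brings out the factor $l(X_{d-1})$ and leaves the affine expression $d!a_dl(X_d)+\widetilde{f}(l(X_1),\ldots,l(X_{d-1}))$, where $\widetilde{f}$ inherits from $\widetilde{g}$ both its coefficients and the degree bounds $\deg_{X_i}\widetilde{f}\le 1$, $i=1,\ldots,d-1$. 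This is the asserted identity.

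The step I expect to be the real obstacle is the bookkeeping of the normalisation in the inductive step. The definition~\eqref{eq:Delta f} factors out $X_{k-1}$ at each stage, whereas additivity of $l$ naturally produces the factor $l(X_{k-1})$; reconciling these two normalisations consistently at every level—so that the final expression collapses to the stated product $l(X_{d-1})(\cdots)$—is the delicate point, and it is exactly the analogue of the careful rewriting performed around~\eqref{eq:fik} in the proof of Lemma~\ref{lem:xpg}.
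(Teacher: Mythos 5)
Your proposal is correct and follows essentially the same route as the paper, whose entire proof is the one-line observation that $l$ is additive, so the differencing argument of Lemmas~\ref{lem:xpg} and~\ref{lem:p g} goes through after the substitution $Y_i=l(X_i)$ and one invokes Corollary~\ref{cor:difdeg<p} for $g$. The normalisation point you flag is real---under the paper's convention of dividing by $X_{k-2}$ at each stage, $\Delta_{X_1,\ldots,X_d}(f)$ in fact carries an extra factor $\prod_{j=1}^{d-2}l(X_j)/X_j$ relative to the displayed formula---but this is a looseness in the lemma's own statement (it is exactly what yields the product $d!a_dl(x_1)\cdots l(x_d)$ in the application to case (iii) of Theorem~\ref{thm:charsumpoly}) rather than a gap in your argument.
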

\begin{proof}
As the polynomial $l$ is additive, that is $l(X_1+X_2)=l(X_1)+l(X_2)$, then the proof follows exactly as the proof of
Lemma~\ref{lem:xpg} and Lemma~\ref{lem:p g}.
\end{proof}

\section{Exponential sums over subspaces}

First we introduce the following:

\begin{definition}
\label{eq:etagood}
For $0<\eta\le 1$, we define a subset $A\subset\L$ to be {\it $\eta$-good} if 
$$
\#\(A\cap b\F\)\le \(\#A\)^{1-\eta}
$$
for any element $b$ and a proper subfield $\F$ of $\L$.
\end{definition}

The main result of this section follows from the following 
estimate due to Bourgain and Glibichuk~\cite[Theorem 4]{BG} which applies to $\eta$-good sets.

For $0<\eta\le 1$, we define 
\begin{equation}
\label{eq:gamma}
\gamma_{\eta}=\min\(\frac{1}{156450},\frac{\eta}{120}\).
\end{equation}

Also, all over the paper  $\psi$ represents an additive character of $\L$.

\begin{lem}
\label{lem:BG}
Let $3\le n\le 0.9\log_2 (r\log_2 q)$ and $A_1,A_2,\ldots,A_n\subseteq \L^*$. Let $0<\eta\le 1$ and $\gamma_{\eta}$ be defined by~\eqref{eq:gamma}. Suppose $\# A_i\ge 3$, $i=1,2,\ldots,n$, and that for every $j=3,4,\ldots,n$
the sets $A_j$ are $\eta$-good. 
Assume further that
$$
\# A_1 \# A_2\(\#A_3 \cdots \# A_n\)^{\gamma_{\eta}}>q^{r(1+\varepsilon)}
$$
for some $\varepsilon>0$. Then, for sufficiently large $q$, we have the estimate
$$
\left|\sum_{a_1\in A_1}\sum_{a_2\in A_2}\cdots \sum_{a_n\in A_n}\psi(a_1a_2\ldots a_n)\right|< 100\#A_1\#A_2\cdots \#A_nq^{-0.45 r\varepsilon/2^n}.
$$
\end{lem}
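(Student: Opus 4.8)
The plan is to treat this as a genuine multilinear exponential sum estimate in the finite field $\L=\F_{q^r}$, established by the additive-combinatorial machinery of Bourgain and Glibichuk; since the statement is verbatim \cite[Theorem 4]{BG}, at the level of the present paper one simply invokes it, and what follows is a reconstruction of the strategy behind it. First I would isolate the two ``heavy'' variables $a_1,a_2$ and regard the product $w=a_3a_4\cdots a_n$ of the remaining variables as a dilation parameter, rewriting the sum as a bilinear form $B(w)=\sum_{a_1\in A_1}\sum_{a_2\in A_2}\psi(w\,a_1a_2)$ averaged over $w$ running through the product set $A_3\cdots A_n$. The reason for separating the variables this way is that $A_1$ and $A_2$ enter the hypothesis $\#A_1\#A_2(\#A_3\cdots\#A_n)^{\gamma_\eta}>q^{r(1+\varepsilon)}$ at full strength, whereas $A_3,\dots,A_n$ contribute only through the small exponent $\gamma_\eta$, so they should be used merely to randomise the dilation rather than to produce cancellation directly.

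The core of the argument is then to bound high even moments of $B(w)$ over the dilates $w$. Using Hölder's inequality to peel off the variables $a_3,\dots,a_n$ one at a time and expanding the resulting moments by orthogonality of $\psi$, I would reduce matters to counting solutions of multiplicative equations of the form $a_1a_2w=a_1'a_2'w'$, that is, to the additive energy of the product set $A_1A_2$ under these dilations. This is exactly where the $\eta$-good hypothesis is essential: because $A_3,\dots,A_n$, and hence their product set, are not concentrated in any coset $b\F$ of a proper subfield, the dilates $w$ spread the bilinear form across $\L$, and a quantitative sum-product / non-concentration estimate forces this energy down to near its diagonal value. Accumulating the savings through the roughly $n$ applications of Cauchy--Schwarz and Hölder---each of which squares the sum and therefore halves the exponential gain, spending the surplus $q^{r\varepsilon}$ supplied by the hypothesis---yields a saving of the shape $q^{-c\,r\varepsilon/2^n}$, with the explicit constants $100$ and $0.45$ being the numerology of the iteration; the upper constraint $n\le 0.9\log_2(r\log_2 q)$ is precisely what keeps the $2^{-n}$ loss from swallowing the main term.

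The main obstacle is the sum-product input in an \emph{arbitrary} finite field $\L=\F_{q^r}$, as opposed to a prime field. Over $\F_p$ one may extend a heavy variable to the whole field and use orthogonality freely, but $\F_{q^r}$ contains many proper subfields, and a set lying in a coset $b\F$ of such a subfield genuinely destroys any nontrivial bound---which is exactly why the non-concentration condition of Definition~\ref{eq:etagood} is imposed, and why it must be used quantitatively rather than merely qualitatively. Proving a sum-product theorem robust enough to control the energy uniformly over all such subfield obstructions is the deep part of \cite{BG}; everything else is bookkeeping of exponents through the Cauchy--Schwarz iteration. For the purposes of this paper I would simply cite that theorem in the form stated.
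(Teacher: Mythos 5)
The paper offers no proof of this lemma at all: it is quoted verbatim as Theorem~4 of Bourgain and Glibichuk~\cite{BG}, which is exactly what you propose to do, so your approach matches the paper's. Your accompanying sketch of the strategy behind the Bourgain--Glibichuk argument is reasonable background but is not load-bearing here; the citation alone is what the paper relies on.
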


For our results we need an estimate for slightly different (and possibly larger) sums
$$
\sum_{a_2\in A_2}\cdots \sum_{a_n\in A_n}\left|\sum_{a_1\in A_1}\psi(a_1a_2\ldots a_n)\right|,
$$
which we derive directly from Lemma~\ref{lem:BG}. 
We record this estimate for more general  weighted sums, which may 
be of independent interest for some future applications.

\begin{cor}
\label{cor:BGw}
Let $4\le n\le 0.9\log_2 (r\log_2 q)+1$ and $A_1,A_2,\ldots,A_n\subseteq \L^*$.
Let $0<\eta\le 1$ and $\gamma_{\eta}$ be defined by~\eqref{eq:gamma}. Suppose $\# A_i\ge 3$, $i=2,\ldots,n$, and that for every $j=4,\ldots,n$ the sets $A_j$ are $\eta$-good. 
Assume further that
\begin{equation}
\label{eq:setscond}
\# A_2 \# A_3\(\#A_4 \cdots \# A_n\)^{\gamma_{\eta}}>q^{r(1+\varepsilon)}
\end{equation}
for some $\varepsilon>0$. 
Let the weights $w_i:\L\to \C$, $i=1,\ldots,n$,
be such that
\begin{equation}
\label{eq:wcond}
\sum_{a_i\in A_i}|w_i(a_i)|^2\le B_i,\quad  i=1,\ldots,n.
\end{equation}
Then, for sufficiently large $q$, for the sum
$$
\cJ=\sum_{a_2\in A_2}\cdots \sum_{a_{n}\in A_{n}}w_2(a_2)\ldots w_n(a_n)\left|\sum_{a_1\in A_1}w_1(a_1)\psi(a_1a_2\ldots a_n)\right|,
$$
we have the estimate
$$
|\cJ|<\prod_{i=1}^n(B_i\#A_i)^{1/2}\((\#A_1)^{-1/2}+10q^{-0.45 r\varepsilon/2^n}\).
$$
\end{cor}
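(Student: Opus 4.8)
The plan is to reduce the weighted sum $\cJ$ to the unweighted multilinear sum controlled by Lemma~\ref{lem:BG}, at the cost of one Cauchy--Schwarz step and the expansion of a square. Write $\cJ=\sum_{a_2,\ldots,a_n}\prod_{i=2}^n w_i(a_i)\,|T(a_2,\ldots,a_n)|$, where $T(a_2,\ldots,a_n)=\sum_{a_1\in A_1}w_1(a_1)\psi(a_1a_2\cdots a_n)$ is the inner sum carrying the weight $w_1$. Then $|\cJ|\le\sum_{a_2,\ldots,a_n}\prod_{i=2}^n|w_i(a_i)|\cdot|T|$, and Cauchy--Schwarz in the variables $(a_2,\ldots,a_n)$ separates off the outer weights: by \eqref{eq:wcond} one has $\sum_{a_2,\ldots,a_n}\prod_{i=2}^n|w_i(a_i)|^2=\prod_{i=2}^n\sum_{a_i}|w_i(a_i)|^2\le\prod_{i=2}^n B_i$, so that $|\cJ|\le\prod_{i=2}^n B_i^{1/2}\bigl(\sum_{a_2,\ldots,a_n}|T|^2\bigr)^{1/2}$. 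It then remains to estimate the second moment $\Sigma:=\sum_{a_2,\ldots,a_n}|T|^2$.

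Next I would expand the square. Using $\overline{\psi(x)}=\psi(-x)$ gives $\Sigma=\sum_{a_1,a_1'\in A_1}w_1(a_1)\overline{w_1(a_1')}\sum_{a_2,\ldots,a_n}\psi\bigl((a_1-a_1')a_2\cdots a_n\bigr)$. The diagonal $a_1=a_1'$ contributes exactly $\bigl(\sum_{a_1}|w_1(a_1)|^2\bigr)\prod_{i=2}^n\#A_i\le B_1\prod_{i=2}^n\#A_i$. For each off-diagonal term set $c:=a_1-a_1'\neq 0$; the inner sum is the clean multilinear sum $\sum_{a_2,\ldots,a_n}\psi(ca_2\cdots a_n)$, and the substitution $a_2\mapsto ca_2$ turns it into a sum over the $n-1$ sets $cA_2,A_3,\ldots,A_n$. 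These satisfy the hypotheses of Lemma~\ref{lem:BG} with $n-1$ in place of its parameter: indeed $\#(cA_2)=\#A_2$, the size and $\eta$-good conditions on $A_4,\ldots,A_n$ are precisely \eqref{eq:setscond} and the Corollary's hypotheses, and the range $4\le n\le 0.9\log_2(r\log_2 q)+1$ is exactly $3\le n-1\le 0.9\log_2(r\log_2 q)$. Hence, for sufficiently large $q$, each off-diagonal inner sum is bounded by $100\prod_{i=2}^n\#A_i\cdot q^{-0.45r\varepsilon/2^{n-1}}$. Estimating the remaining weight sum by Cauchy--Schwarz, $\sum_{a_1\neq a_1'}|w_1(a_1)||w_1(a_1')|\le\bigl(\sum_{a_1}|w_1(a_1)|\bigr)^2\le\#A_1\,B_1$, produces an off-diagonal contribution at most $100\,B_1\,\#A_1\prod_{i=2}^n\#A_i\cdot q^{-0.45r\varepsilon/2^{n-1}}$.

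Finally I would assemble the pieces: $\Sigma\le B_1\prod_{i=2}^n\#A_i\bigl(1+100\,\#A_1\,q^{-0.45r\varepsilon/2^{n-1}}\bigr)$, and using $\sqrt{x+y}\le\sqrt{x}+\sqrt{y}$ together with the two fortunate identities $100=10^2$ and $2^{n-1}=2^n/2$, the square root of the bracket is at most $1+10(\#A_1)^{1/2}q^{-0.45r\varepsilon/2^n}$. Multiplying through by $\prod_{i=2}^n B_i^{1/2}$ gives $|\cJ|\le\prod_{i=1}^n B_i^{1/2}\prod_{i=2}^n(\#A_i)^{1/2}\bigl(1+10(\#A_1)^{1/2}q^{-0.45r\varepsilon/2^n}\bigr)$, and factoring out $(\#A_1)^{1/2}$ rewrites this as $\prod_{i=1}^n(B_i\#A_i)^{1/2}\bigl((\#A_1)^{-1/2}+10\,q^{-0.45r\varepsilon/2^n}\bigr)$, the claimed bound; the strict inequality is inherited from the strict estimate of Lemma~\ref{lem:BG}.

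The main obstacle is the off-diagonal step. The difference $a_1-a_1'$ does not lie in $A_1$, so Lemma~\ref{lem:BG} cannot be invoked with $A_1$ among its factor sets. The resolution is that after expanding the square there is no genuine $a_1$-dependence left in the exponential beyond the scalar $c=a_1-a_1'$, and absorbing $c$ into $A_2$ restores a bona fide $(n-1)$-fold multilinear sum of exactly the type Lemma~\ref{lem:BG} controls; one must also check that multiplication by $c$ preserves cardinalities and that the $\eta$-good hypothesis is only needed on $A_4,\ldots,A_n$, which is the reason $A_2,A_3$ (and hence $cA_2$) are exempt. The crude factor $\#A_1$ lost when bounding the $w_1$-weights is precisely what surfaces as the $(\#A_1)^{-1/2}$ term in the final estimate.
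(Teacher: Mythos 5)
Your argument is correct and follows essentially the same route as the paper: Cauchy--Schwarz in $(a_2,\ldots,a_n)$ to peel off the outer weights, expansion of the square over $a_1,a_1'$, the diagonal giving $B_1\prod_{i=2}^n\#A_i$, and Lemma~\ref{lem:BG} applied to the $(n-1)$-fold sum with the nonzero factor $a_1-a_1'$ absorbed (the paper simply invokes the lemma ``for the sum over $A_2,\ldots,A_n$'' rather than explicitly dilating $A_2$, but this is the same step), followed by $\sum_{a_1,a_1'}|w_1(a_1)||w_1(a_1')|\le\#A_1B_1$ and taking square roots. Your accounting of why the $\eta$-good hypothesis is only needed for $A_4,\ldots,A_n$ and why the exponent drops from $2^{n-1}$ to $2^n$ matches the paper exactly.
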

\begin{proof}
Squaring and applying the Cauchy-Schwarz inequality, we get
\begin{equation*}
\begin{split}
|\cJ|^2&\le \sum_{a_2\in A_2}\cdots \sum_{a_n\in A_n}|w_2(a_2)|^2\ldots |w_n(a_n)|^2\\
&\qquad\qquad\qquad\qquad\quad\cdot\sum_{a_2\in A_2}\cdots \sum_{a_n\in A_n}\left|\sum_{a_1\in A_1}w_1(a_1)\psi(a_1a_2\ldots a_n)\right|^2\\
&= \prod_{i=2}^n\(\sum_{a_i\in A_i}|w_i(a_i)|^2\)\sum_{a_1,b_1\in A_1}w_1(a_1)\overline{w_1(b_1)}\\
&\qquad\qquad\qquad\qquad\qquad\qquad\cdot\sum_{a_2\in A_2}\cdots \sum_{a_n\in A_n}\psi(a_2\ldots a_n(a_1-b_1)).
\end{split}
\end{equation*}
Applying~\eqref{eq:wcond} and Lemma~\ref{lem:BG} (for the sum over $A_2,\ldots,A_n$), we obtain
\begin{equation*}
\begin{split}
|\cJ|&\le \prod_{i=2}^nB_i^{1/2}\left(B_1^{1/2}\prod_{i=2}^n(\#A_i)^{1/2}\right.\\
&\qquad\qquad\left.+10\prod_{i=2}^n\(\#A_i\)^{1/2}q^{-0.45 r\varepsilon/2^n}\(\sum_{a_1,b_1\in A_1}w_1(a_1)\overline{w_1(b_1)}\)^{1/2}\right),
\end{split}
\end{equation*}
where the first summand comes from the case $a_1=b_1$.
Taking into account that
$$
\sum_{a_1,b_1\in A_1}w_1(a_1)\overline{w_1(b_1)}= \left|\sum_{a\in A_1}w_1(a)\right|^2\le \#A_1\sum_{a\in A_1}|w_1(a)|^2,
$$
and using again~\eqref{eq:wcond}, we conclude the proof.
\end{proof}

Throughout the paper, we slightly abuse this notion of $\eta$-good sets and 
in the case of affine spaces introduce the following:

\begin{definition}
\label{eq:etagood aff}
We say that an affine subspace $\cA\subset\L$ is 
 $\eta$-good if $\cA = \cL+a$ for some $a\in \L$ and  linear
 subspace $\cL \subseteq \L$ which is $\eta$-good 
 as in Definition~\ref{eq:etagood}. 
 \end{definition}

Using 
Lemma~\ref{lem:BG}
we prove the following estimate of additive 
character sums with polynomial argument over an affine subspace of $\L$, which can be seen as an explicit version of~\cite[Theorem C]{Bou}. However, we notice that all previous such estimates are known for polynomials of degree less than $p$. Here we obtain results for more general polynomials.

For $0<\varepsilon,\eta\le 1$, we define
\begin{equation}
\label{eq:delta}
\delta(\varepsilon,\eta)=\max\(4,\gamma_{\eta}^{-1}(\varepsilon^{-1}-1)+3\).
\end{equation}

\begin{thm}
\label{thm:charsumpoly}
Let $0<\varepsilon,\eta\le 1$ be arbitrary numbers, $\gamma_{\eta}$
and 
$\delta(\varepsilon,\eta)$ be defined by~\eqref{eq:gamma} and~\eqref{eq:delta}, respectively. 
Let $\cA\subseteq\L$ be an $\eta$-good affine subspace of dimension $s$ over $\K$ with
$$
s\ge \varepsilon r.
$$
Let $d$ be an integer satisfying the inequalities 
\begin{equation}
\label{eq:h1}
\delta(\varepsilon,\eta)\le d\le  \min\(p, 0.9\log_2\log_2q^r\)+1,
\end{equation}
and let
$f$ be any polynomial of one of the following forms: 
\begin{itemize}
\item[(i)] $f=X^{p^{\nu}}g_{\nu}+\ldots+X^pg_1+g_0$, where  $g_i\in\L[X]$ , $i=0\ldots,\nu,
$ are such that
$$
\deg g_0=d\ge \deg g_i+3,\qquad  i=1\ldots,\nu;
$$  

\item[(ii)] $f=X^{p^{\nu}+p^{\nu-1}+\ldots+p+1}+g\in\L[X]$, where $g\in\L[X]$ with $\deg g = d\ge 4$;
\item[(iii)] $f=g(l(X))\in\L[X]$,  
where $g\in\L[X]$ with $\deg g = d$ and $l\in\L[X]$ is a permutation $p$-polynomial of the form~\eqref{eq:lin} such that $l(\cL)$ is $\eta$-good.
\end{itemize}
Let $\chi:\L\to \C$ a function satisfying $\chi(x+y)=\chi(x)\chi(y)$, $x,y\in\L$, and such that
$$
\sum_{x\in\cA}|\chi(x)|^{2^{d}}\le B.
$$
Then, for sufficiently large $q$,
we have
$$
\left|\sum_{x\in\cA}\chi(x)\psi(f(x))\right|\le 2B^{(d+1)/2^{d+1}}q^{s\(1-(d+1)/2^{d+1}\)-r\vartheta},
$$
where 
\begin{equation}
\label{eq:theta}
\vartheta=\frac{0.9\varepsilon}{2^{2d}}.
\end{equation}
\end{thm}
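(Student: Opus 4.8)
The plan is to linearise $f$ in the running variable by iterated van der Corput (Weyl) differencing, and then to recognise the outcome as a weighted multilinear sum to which the Bourgain--Glibichuk estimate of Corollary~\ref{cor:BGw} applies with $n=d$. The three families (i)--(iii) are exactly those for which the difference identities of Section~2 force the phase, after $d-1$ differencings, into a clean product shape, despite the $p$-power or composition structure that makes $\deg f$ far exceed $p$. Writing $\cA=\cL+a$, I would apply the differencing identity $|\sum_x b_x|^2=\sum_{h}\sum_x b_{x+h}\overline{b_x}$ a total of $d-1$ times to $S=\sum_{x\in\cA}\chi(x)\psi(f(x))$, each step introducing a new difference variable $h_i\in\cL$. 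Since $\chi(x+h)=\chi(x)\chi(h)$, each step squares the weight on the main variable, so after $d-1$ steps that variable carries $|\chi(x)|^{2^{d-1}}$; this is why the hypothesis is phrased with $\sum_{x}|\chi(x)|^{2^d}\le B$, matching the moment condition~\eqref{eq:wcond} for $w_1=|\chi|^{2^{d-1}}$. The decisive point is that, tracking the factors extracted in~\eqref{eq:Delta f}--\eqref{eq:delta k-1 k}, the phase after $d-1$ steps equals $h_1\cdots h_{d-2}\,\Delta_{h_1,\dots,h_{d-1},x}(f)$, which by Corollary~\ref{cor:xpf} (resp.\ Lemma~\ref{lem:p g}, Lemma~\ref{lem:g(l)}) equals $h_1\cdots h_{d-1}\bigl(d!\,a_d x+\widetilde{f}(h_1,\dots,h_{d-1})\bigr)$, with $l$ applied throughout in case (iii). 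Thus the part depending on $x$ is the \emph{pure product} $d!\,a_d\,h_1\cdots h_{d-1}\,x$, while the correction $h_1\cdots h_{d-1}\widetilde{f}(h_1,\dots,h_{d-1})$ is independent of $x$.

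\textbf{Applying Bourgain--Glibichuk.} I would then set $a_1=x$ and $a_{i}=h_{i-1}$ for $2\le i\le d$ (so $n=d$; in case (iii) take $a_1=l(x)$, $a_i=l(h_{i-1})$), and cast the sum in the form of $\cJ$ in Corollary~\ref{cor:BGw}. The $x$-independent correction phase, having modulus one, is annihilated by the absolute value surrounding the inner $a_1$-sum, leaving precisely $\psi(d!\,a_d\,a_1\cdots a_n)$. As $d<p$ by~\eqref{eq:h1}, we have $d!\,a_d\neq0$, so the constant is absorbed into $\psi$ (or into $A_1$); this non-vanishing is exactly what forces $d<p$ and substitutes for the usual degree hypothesis. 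The hypotheses of Corollary~\ref{cor:BGw} are checked as follows: $4\le n=d\le 0.9\log_2(r\log_2 q)+1$ from~\eqref{eq:h1}; the sets $A_4,\dots,A_n$ are cosets of $\cL$ (of $l(\cL)$ in case (iii)) and hence $\eta$-good by Definition~\ref{eq:etagood aff}; and the size condition~\eqref{eq:setscond} holds because $\#A_i\gg q^{s}\ge q^{\varepsilon r}$, so that $q^{2s}\,q^{s(d-3)\gamma_{\eta}}>q^{r(1+\varepsilon)}$ precisely when $(d-3)\gamma_{\eta}>\varepsilon^{-1}-1$, which is the second alternative in~\eqref{eq:delta}. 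Thus $d\ge\delta(\varepsilon,\eta)$ and $s\ge\varepsilon r$ are used exactly to meet~\eqref{eq:setscond}.

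\textbf{Unwinding and the main obstacle.} Finally I would feed the bound of Corollary~\ref{cor:BGw} back through the $d-1$ differencings, controlling each $\chi$-weighted factor $B_i$ by $B$ through the moment hypothesis and H\"older's inequality, and take the appropriate root; the savings $q^{-0.45 r\varepsilon/2^{n}}$ with $n=d$ then yields $q^{-r\vartheta}$ with $\vartheta$ as in~\eqref{eq:theta}, while the powers of $\#\cA=q^{s}$ and of $B$ assemble into $q^{s(1-(d+1)/2^{d+1})}$ and $B^{(d+1)/2^{d+1}}$. The conceptual heart is the differencing step: verifying for the non-generic shapes (i)--(iii) that $d-1$ differencings eliminate all high-degree and $p$-power terms and leave a single nonzero multilinear monomial in the main variable --- this is the sole role of the Section~2 lemmas and the reason the method reaches beyond $\deg f<p$. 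The most laborious ingredients are the weight bookkeeping (arranging the $\chi$-weights and the counting factors across the $d$ variables so that $\sum|\chi|^{2^d}\le B$ suffices and the exponents collapse to $(d+1)/2^{d+1}$) and the technical point that the variable ranges of the differenced summations can be replaced by full cosets of $\cL$ (or $l(\cL)$) without loss, so that the $A_i$ entering Corollary~\ref{cor:BGw} are genuine $\eta$-good affine subspaces.
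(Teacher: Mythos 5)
Your proposal is correct and follows essentially the same route as the paper: iterated Weyl/van der Corput differencing with the weight $|\chi|$ doubling at each step, the Section~2 difference identities (Corollary~\ref{cor:xpf}, Lemmas~\ref{lem:p g} and~\ref{lem:g(l)}) reducing the phase to the single multilinear monomial $d!\,a_d x_1\cdots x_d$ with the $x$-independent correction absorbed by the absolute value, and then Corollary~\ref{cor:BGw} with $n=d$ and $A_1=\cdots=A_n=\cL$ (resp.\ $l(\cL)$), the hypotheses $s\ge\varepsilon r$ and $d\ge\delta(\varepsilon,\eta)$ being used exactly as you say to verify~\eqref{eq:setscond}. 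The bookkeeping you defer (moment bounds $\sum_{x\in\cL}|\chi(x)|^{2^{d-1}}\le B^{1/2}q^{s/2}$ and the collapse of exponents to $(d+1)/2^{d+1}$) goes through exactly as anticipated.
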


\begin{proof} 
Write $\cA=a+\cL$, $a\in\L$, where $\cL$ is a $\K$-linear space of $\dim_{\K}\cL=s$. Making the linear transformation $x\in\cL \to a+x$, we reduce the problem to estimating the character sum over a linear subspace,
$$
S=\sum_{x\in\cL}\chi(x)\psi\(f(x)\).
$$

We use the method  in~\cite{Weyl}. For this we square the sum and after changing the order of summation and substituting $x_1\to x_1+x_2$, we get
\begin{equation*}
\begin{split}
|S|^2&=\sum_{x_2\in\cL}\sum_{x_1\in\cL}\chi(x_1)\overline{\chi(x_2)}\psi\(f(x_1)-f(x_2)\)\\
&\qquad\qquad\qquad\qquad\qquad=\sum_{x_1\in\cL}\chi(x_1)\sum_{x_2\in\cL}|\chi(x_2)|^2\psi\(\Delta_{x_1,x_2}(f)\),
\end{split}
\end{equation*}
where $\Delta_{X_1,X_2}(f)$ is defined by~\eqref{eq:Delta f}. 

Squaring and applying the Cauchy-Schwarz inequality again, we get
\begin{equation*}
\begin{split}
|S|^4&\le q^s\sum_{x_1\in\cL}|\chi(x_1)|^2\left|\sum_{x_2\in\cL}|\chi(x_2)|^2\psi\(\Delta_{x_1,x_2}(f)\)\right|^2\\
&=q^s\sum_{x_1\in\cL}|\chi(x_1)|^2\sum_{x_2,x_3\in\cL}|\chi(x_2)|^2|\chi(x_3)|^2\psi\(\Delta_{x_1,x_2}(f)-\Delta_{x_1,x_3}(f)\).
\end{split}
\end{equation*}
Substituting $x_2\to x_2+x_3$, we get
\begin{equation*}
\begin{split}
|S|^4&\le q^s\sum_{x_1,x_2\in\cL}|\chi(x_1)|^2|\chi(x_2)|^2\sum_{x_3\in\cL}|\chi(x_3)|^4\psi\(x_1\Delta_{x_1,x_2,x_3}(f)\),
\end{split}
\end{equation*}
where $\Delta_{X_1,X_2,X_3}(f)$ is defined by~\eqref{eq:Delta f}.

Simple inductive argument shows that 
applying this procedure, that is, squaring and applying the Cauchy-Schwarz inequality, $d-1$ times, and using 
Corollary~\ref{cor:xpf} and Lemma \ref{lem:p g} for the polynomial $f$ corresponding to (i) and (ii), we get
to the exponential sum
\begin{equation*}
\begin{split}
|S|^{2^{d-1}}&\le q^{s(2^{d-1}-d)}\sum_{x_1,x_2,\ldots,x_{d-1}\in\cL}|\chi(x_1)|^{2^{d-2}}|\chi(x_2)|^{2^{d-2}}\ldots|\chi(x_{d-1})|^{2^{d-2}}\\
&\qquad\qquad\qquad\qquad\qquad\quad\cdot\left|\sum_{x_d\in\cL}|\chi(x_d)|^{2^{d-1}}\psi\(d!a_dx_1x_2\ldots x_{d}\)\right|.
\end{split}
\end{equation*}

We apply now Corollary~\ref{cor:BGw} with 
$$4\le n=d\le 0.9\log_2\log_2q^r+1
$$
and $A_1=\ldots=A_n=\cL$. We note that the condition~\eqref{eq:h1} implies that
$$
s\(2+(d-3)\gamma_{\eta}\)>r(\varepsilon+1),
$$
and thus the condition~\eqref{eq:setscond} in Corollary~\ref{cor:BGw} is also satisfied. Moreover, we have
$$
\(\sum_{x\in\cL}|\chi(x)|^{2^{d-1}}\)^2\le q^s\sum_{x\in\cL}|\chi(x)|^{2^{d}}\le Bq^s,
$$
and thus
$$
\sum_{x\in\cL}|\chi(x)|^{2^{d-1}}\le B^{1/2}q^{s/2}.
$$
We get
$$
|S|^{2^{d-1}} \le 10 B^{(d+1)/4}q^{s\(2^{d-1}-(d+1)/4\)- 0.45 r\varepsilon/2^d}.
$$
which immediately implies the result. 

For the case (iii), proceeding the same but applying Lemma~\ref{lem:g(l)} and taking into account that $l$ is a permutation polynomial, we get to the exponential sum
\begin{equation*}
\begin{split}
|S|^{2^{d-1}}&\le q^{s(2^{d-1}-d)}\sum_{x_1,x_2,\ldots,x_{d-1}\in\cL}|\chi(x_1)|^{2^{d-2}}\chi(x_2)|^{2^{d-2}}\ldots|\chi(x_{d-1})|^{2^{d-2}}\\
&\qquad\qquad\qquad\qquad\cdot\left|\sum_{x_d\in\cL}|\chi(x_d)|^{2^{d-1}}\psi\(d!a_dl(x_1)l(x_2)\ldots l(x_{d})\)\right|\\
&=q^{s(2^{d-1}-d)}\sum_{x_1,x_2,\ldots,x_{d-1}\in l(\cL)}|\chi(l^{-1}(x_1))|^{2^{d-2}}\chi(l^{-1}(x_2))|^{2^{d-2}}\ldots\\
&\quad\cdot|\chi(l^{-1}(x_{d-1}))|^{2^{d-2}}\left|\sum_{x_d\in l(\cL)}|\chi(l^{-1}(x_d))|^{2^{d-1}}\psi\(d!a_dx_1x_2\ldots x_{d}\)\right|
\end{split}
\end{equation*}
where $l(\cL)$ is a subset of $\L$ of cardinality $q^s$ (we note that if $l$ is a $q$-polynomial, then $l(\cL)$ is actually a $\K$-linear subspace of $\L$) and $l^{-1}$ is the compositional inverse of $l$, which is again a linearised polynomial of the form~\eqref{eq:lin}, see~\cite[Theorem 4.8]{WuLi}. Thus, we have that
$$
\chi\(l^{-1}(x+y)\)=\chi\(l^{-1}(x)+l^{-1}(y)\)=\chi\(l^{-1}(x)\)\chi\(l^{-1}(y)\)
$$
and
$$
\sum_{x\in l(\cL)}|\chi\(l^{-1}(x)\)|^{2^{d}}=\sum_{x\in \cL}|\chi\(x\)|^{2^{d}}\le B.
$$
As we also assume that $l(\cL)$ is $\eta$-good, the estimate follows the same by applying Corollary~\ref{cor:BGw}.
\end{proof}

\begin{remark}
 We note that, by~\cite[Theorem 7.9]{LN}, a $p$-polynomial $l\in\L[X]$ as defined by~\eqref{eq:lin} is a permutation polynomial  if and only if it has only the root $0$  in $\L$. 
\end{remark}

\begin{remark}
 In Theorem~\ref{thm:charsumpoly}, (iii), we assume that the set $l(\cL)$ is $\eta$-good. We note that when $l(X)=X^{p^{\nu}}$, then $l(b\F)=b^{p^{\nu}}\F$, for any subfield $\F$ of $\L$ and any element $b$ not in $\F$, and thus $l(\cL)$ is $\eta$-good.
Another immediate example can be given for a prime $q=p$ and also a prime  $r$. 
Since the only proper subfield $\F$ of $\L$ is $\F_p$, if $s\ge 2$, that is, $\#\cL\ge p^2$ then, 
$$
\#\(l(\cL)\cap b\F\)\le p\le \#l(\cL)^{1/2}.
$$
\end{remark}

\begin{remark}
\label{rem:chi} Probably the most natural examples of the function $\chi(x)$ in 
Theorem~\ref{thm:charsumpoly} is given by exponential functions such as 
$$
\chi(x) = \exp\(2 \pi  \sum_{j=1}^m \zeta_j \Tr_{L|\F_p}(\tau_j x) \)
$$
for some $\zeta_j \in \R$ and $\tau_j \in \L$. 
\end{remark}

\section{Values of polynomials in subspaces}

In this section we give upper bounds for $\cI_f(\cA,\cB)$ defined by~\eqref{eq:f(A)B}, that is, the cardinality of $f(\cA)\cap \cB$, for a polynomial $f\in\L[X]$ and affine subspaces $\cA, \cB$ of $\L$ over $\K$.

For our first result we use the Weil bound, see~\cite[Theorem 5.38]{LN}, 
in a standard way. We recall it for the sake of completeness and use it 
as a benchmark for further improvements.

\begin{lem}
\label{lem:Weil}
Let $f\in\L[X]$ be of degree $d\ge 1$ with $(d,p)=1$, and let $\psi$ be a nontrivial additive character of $\L$. Then
$$
\left|\sum_{c\in\L}\psi(f(c))\right|\le (d-1)q^{r/2}.
$$
\end{lem}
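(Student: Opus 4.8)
The plan is to interpret the sum as a point count on an Artin--Schreier curve and then invoke the Riemann Hypothesis for curves over finite fields. Since $\psi$ is a nontrivial additive character of $\L=\F_{q^r}$, there is a nontrivial additive character $\lambda$ of $\F_p$ and an element $\beta\in\L^*$ with $\psi(x)=\lambda\(\Tr_{\L|\F_p}(\beta x)\)$. Replacing $f$ by $\beta f$, which has the same degree $d$ and the same coprimality $(d,p)=1$, I may assume $\psi(x)=\lambda\(\Tr_{\L|\F_p}(x)\)$. I would then consider the affine Artin--Schreier curve $C\colon y^p-y=f(x)$ over $\L$. For each $x\in\L$ the number of $y\in\L$ with $y^p-y=f(x)$ equals $p$ when $\Tr_{\L|\F_p}(f(x))=0$ and $0$ otherwise, so by orthogonality of the additive characters of $\F_p$ the affine point count satisfies
\[
\#C(\L)=\sum_{x\in\L}\sum_{j=0}^{p-1}\lambda\(j\,\Tr_{\L|\F_p}(f(x))\)=q^r+\sum_{j=1}^{p-1}\sum_{x\in\L}\lambda\(j\,\Tr_{\L|\F_p}(f(x))\),
\]
and the sum I must bound is the $j=1$ term.

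The key point is to isolate the \emph{individual} character sum rather than their aggregate. I would do this through the $L$-function of the cover: the zeta function of a smooth model of $C$ factors over the characters $\lambda^j$ of the Galois group $\Z/p\Z$, and for each nontrivial $j$ the associated $L$-function is a polynomial whose reciprocal roots $\omega_{1,j},\ldots$ are exactly the Frobenius eigenvalues on the compactly supported cohomology of the Artin--Schreier sheaf $\cL_{\lambda^j}(f)$ on $\A^1$, so that $\sum_{x\in\L}\lambda\(j\,\Tr(f(x))\)=-\sum_i\omega_{i,j}$ by the Lefschetz trace formula. The number of such eigenvalues is the degree of this $L$-function, which I would compute from the Euler--Poincar\'e/Grothendieck--Ogg--Shafarevich formula: the sheaf has rank $1$, its only wild ramification is at $\infty$, where the Swan conductor equals $\deg f=d$ precisely because $(d,p)=1$, and since $H^0_c=H^2_c=0$ one gets $\dim H^1_c=-\chi_c=d-1$.

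Finally I would apply the Riemann Hypothesis for curves (Weil's theorem), which gives that every Frobenius eigenvalue $\omega_{i,j}$ over $\L$ has absolute value $q^{r/2}$. Hence
\[
\left|\sum_{c\in\L}\psi(f(c))\right|=\left|\sum_{i=1}^{d-1}\omega_{i,1}\right|\le (d-1)q^{r/2},
\]
as claimed. The genuinely deep input is Weil's Riemann Hypothesis, which I would cite rather than reprove; the main technical obstacle in a self-contained write-up is the pair of bookkeeping facts that make the estimate sharp, namely that the cohomology splits off the single character $\lambda$ (so that we bound $S$ and not the full sum over $j$) and that the relevant degree is exactly $d-1$ --- the latter being where the hypothesis $(d,p)=1$ is essential, since otherwise the Swan conductor at infinity, and hence the degree, can drop. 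As the paper uses this only as a benchmark, in practice I would simply cite~\cite[Theorem 5.38]{LN}.
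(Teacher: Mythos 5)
The paper does not actually prove this lemma: it is quoted verbatim as a known benchmark, with the proof delegated to the citation \cite[Theorem 5.38]{LN}. Your argument is a correct sketch of precisely that theorem, in its modern cohomological dress: the reduction of $\psi$ to a trace character, the Artin--Schreier cover $y^p-y=f(x)$, the factorisation of the $L$-function over the characters of $\Z/p\Z$ so that the single sum $S$ (and not the aggregate over $j$) is bounded, the computation $\dim H^1_c=d-1$ from the Swan conductor $d$ at infinity (where $(d,p)=1$ is used), and Weil's Riemann Hypothesis giving $|\omega_{i,j}|\le q^{r/2}$. The proof in Lidl--Niederreiter is the function-field $L$-function version of the same chain of ideas, so you are essentially reproving the cited source rather than diverging from the paper. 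The one step you assert without justification is $H^2_c=0$: for a lisse rank-one sheaf on $\A^1$ this holds if and only if the Artin--Schreier sheaf is geometrically nontrivial, i.e.\ $f$ is not of the form $g^p-g+c$ over $\overline{\F}_p$; this is guaranteed here because such an $f$ would have degree divisible by $p$ (or be constant), contradicting $d\ge 1$ and $(d,p)=1$. With that one line added, your write-up is complete; for the purposes of this paper the citation alone suffices, as you note.
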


\begin{thm}
\label{thm:genf}
Let $f\in\L[X]$ be a polynomial of degree $d\ge 2$, $(d,p)=1$, $\cA\subseteq\L$ and $\cB\subseteq\L$ affine subspaces of dimension $s$ and $m$, respectively, over $\K$. Then, we have
$$
\cI_f(\cA,\cB)=q^{s+m-r}+O\(dq^{r/2}\).
$$
\end{thm}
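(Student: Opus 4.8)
The plan is to detect membership in both affine subspaces by additive characters and then reduce everything to complete exponential sums over $\L$, to which the Weil bound of Lemma~\ref{lem:Weil} applies. Fix a nontrivial additive character $\psi$ of $\L$, so that every additive character has the form $x\mapsto\psi(bx)$ for a unique $b\in\L$ and the pairing $(b,z)\mapsto\psi(bz)$ is nondegenerate. Write $\cA=a+\cL$ and $\cB=c+\cM$ with $\cL,\cM$ linear subspaces over $\K$ of dimensions $s$ and $m$, respectively. For a $\K$-subspace $\cM$ put $\cM^{\perp}=\{b\in\L:\psi(bz)=1\text{ for all }z\in\cM\}$; one checks that $\cM^{\perp}$ is again a $\K$-subspace, of dimension $r-m$, and by orthogonality the indicator function of $\cM$ equals $q^{m-r}\sum_{b\in\cM^{\perp}}\psi(bz)$, and similarly for $\cL$.

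Substituting these two expansions into $\cI_f(\cA,\cB)=\sum_{u\in\L}\mathbf{1}_{\cA}(u)\mathbf{1}_{\cB}(f(u))$ and interchanging the order of summation, I expect to arrive at
$$
\cI_f(\cA,\cB)=\frac{1}{q^{2r-s-m}}\sum_{t\in\cL^{\perp}}\sum_{b\in\cM^{\perp}}\psi(-ta-bc)\sum_{u\in\L}\psi\(bf(u)+tu\),
$$
where the phase factors $\psi(-ta-bc)$ have modulus $1$. The term $(b,t)=(0,0)$ gives the inner sum $q^{r}$, hence the main term $q^{s+m-r}$; the terms with $b=0$ and $t\neq0$ give $\sum_{u\in\L}\psi(tu)=0$ and so contribute nothing.

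For the remaining terms, with $b\neq0$, the argument $g(u)=bf(u)+tu$ is a polynomial of degree $d\ge2$ with leading coefficient $ba_d\neq0$, and since $(d,p)=1$ the hypothesis of Lemma~\ref{lem:Weil} is met; this is precisely where the coprimality $(d,p)=1$ and the condition $d\ge2$ enter, the latter ensuring that the linear perturbation $tu$ does not affect the leading term. Hence each inner sum is at most $(d-1)q^{r/2}$ in absolute value. Counting at most $q^{r-m}$ choices of $b$ and $q^{r-s}$ choices of $t$, the total contribution of these terms is bounded by $q^{-(2r-s-m)}\cdot q^{r-m}\cdot q^{r-s}\cdot(d-1)q^{r/2}=(d-1)q^{r/2}$, which is $O(dq^{r/2})$. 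The only slightly delicate points are the bookkeeping of the annihilator dimensions, so that the normalising powers of $q$ cancel to leave exactly $q^{s+m-r}$, and the verification that $\cM^{\perp}$ is a genuine $\K$-subspace of dimension $r-m$; neither is a real obstacle. Assembling the main term with this bound yields the claim.
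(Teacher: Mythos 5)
Your proof is correct and follows essentially the same route as the paper: detect membership in both affine subspaces by orthogonality of additive characters, isolate the trivial frequency as the main term, and apply the Weil bound (Lemma~\ref{lem:Weil}) to the remaining complete sums over $\L$, with the same power counting giving the error $O(dq^{r/2})$. The only cosmetic difference is that you phrase the detection via the annihilator subspaces $\cL^{\perp},\cM^{\perp}$ for the pairing $\psi(bz)$, while the paper uses dual bases and the condition $\Tr_{\L|\K}(\beta_i u)=0$; these are equivalent.
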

\begin{proof}
As in Theorem~\ref{thm:charsumpoly}, we can reduce the problem to estimating $\cI_f(\cL_1,\cL_2)$, where $\cL_1,\cL_2$ are linear spaces.

Let $\beta_1,\ldots,\beta_{r-s}$ be the basis for the complementary space of $\cL_1$ and $\omega_1,\ldots,\omega_{r-m}$ be the basis for the complementary space of $\cL_2$, that is, $u\in\cL_1$ and $v\in\cL_2$ if and only if
\begin{equation}
\label{eq:Tr=0}
\Tr_{\L|\K}(\beta_iu)=0, \ i=1,\ldots,r-s,\quad  \Tr_{\L|\K}(\omega_iv)=0, \ i=1,\ldots,r-m.
\end{equation}

We use the relations~\eqref{eq:Tr=0} to give an upper bound for $\cI_f(\cA,\cB)=\cI_f(\cL_1,\cL_2)$.
Indeed, let 
$\psi$ be a nontrivial additive character of $\L$. Using additive character sums to count the elements $u\in\cL_1$ such that the elements of $f(u)$ satisfy~\eqref{eq:Tr=0}, we have
\begin{equation*}
\begin{split}
\cI_f(\cL)&=\frac{1}{q^{2r-s-m}}\sum_{x\in\L}\sum_{\substack{c_{i}, d_j\in\K\\ i=1,\ldots,r-s\\ j=1,\ldots,r-m}}\psi\(\sum_{i=1}^{r-s}c_{i}\beta_i x+\sum_{j=1}^{r-m}d_{j}\omega_jf(x)\)\\
&=q^{s+m-r}+\frac{1}{q^{2r-s-m}}\sum_{\substack{c_{i}, d_i\in\K\\ i=1,\ldots,r-s\\ j=1,\ldots,r-m}}\hskip-22 pt {\phantom{{\Sigma^2}}}^*\sum_{x\in\L}\psi\(\sum_{i=1}^{r-s}c_{i}\beta_i x+\sum_{j=1}^{r-m}d_{j}\omega_jf(x)\),
\end{split}
\end{equation*}
where the first term is given by $c_{i}=d_{j}=0$, $i=1,\ldots,r-s$, $j=1,\ldots,r-m$, and $\sum^{*}$ means that at least one element $c_{i}, d_j\ne0$.

We notice that since $\deg f=d\ge 2$, nontrivial linear combinations
$$\sum_{i=1}^{r-s}c_{i}\beta_i x+\sum_{j=1}^{r-m}d_{j}\omega_jf(x), \ c_{i}, d_j\in\K,\ i=1,\ldots,r-s,\ j=1,\ldots,r-m,
$$ 
that appear in the inner sum 
are all nonconstant polynomials. Indeed, assume that this
is not the case, and without loss of generality we can also assume that $d_i\ne 0$ for at least one $i=1,\ldots,t$. Then, the vanishing of the leading coefficients (of the monomial $X^d$)
$$
\sum_{i=1}^{r-m}d_i\omega_iX^{d}=0
$$
implies that the elements $\omega_1,\ldots,\omega_{r-m}$ are linearly dependent as elements of $\L$ seen as a vector space over $\K$, which contradicts the hypothesis. 

We can apply now the Weil bound given by Lemma~\ref{lem:Weil} to the sum over $x\in\L$ and conclude the proof.
\end{proof}
We note that the bound of Theorem~\ref{thm:genf} is nontrivial whenever $dq^{r/2}<q^s$, and thus only for $s>r(1/2+\varepsilon)$, for some $\varepsilon>0$. 

In the rest of this section we obtain a bound that depends on both parameters $s$ and $m$. We recall first a similar result that was recently obtained in~\cite[Theorem 7]{RNS} for the case $\cA=\cB$ and only for polynomials of degree smaller than $p$.

Let $f\in \L[X]$ be of degree $d=\deg f$ 
with $p> d \ge 2$ and let $\cA \subseteq \L$ be an  affine subspace of dimension $s$ over $\K$
such that for any subfield $\F \subseteq \L$ and any $b\in \L$ we have
$$
\#\(\cL \cap b\F  \) \leq  \max{\left\{(\#\L)^{1/2},\frac{q^{s(1-\rho_d)}}{8}\right\}}, 
$$ 
where $\cA = a + \cL$ for some $a\in \F$ and a linear subspace $\cL \subseteq \L$. 
Then the following estimate is obtained in~\cite[Theorem 7]{RNS}:
\begin{equation}
\label{eq:RNS}
\cI_f(\cA,\cA) \ll  q^{s(1-\kappa_d)}, 
\end{equation}
with 
$$
\eta_d=\frac{4}{277\cdot 5^{d-2}-1},\quad \kappa_d=\frac{4}{277\cdot 5^{d-2}+3},
$$
and for $d\ge 3$,
$$
\vartheta_d = \eta_d+\vartheta_{d-1}  - \eta_d\vartheta_{d-1},\quad \rho_d = \eta_d+\vartheta_d -  \eta_d\vartheta_d, 
$$
where $\eta_2=\vartheta_2 =  1/69$.

We prove now one of our main results using Theorem~\ref{thm:charsumpoly}.

\begin{thm}
\label{thm:fBG}
Let $0<\varepsilon,\eta\le 1$ be arbitrary numbers and let $\gamma_{\eta}$ and 
$\delta(\varepsilon,\eta)$ be defined by~\eqref{eq:gamma} and~\eqref{eq:delta}, respectively.
Let $\cA\subseteq\L$ be an $\eta$-good affine subspace of dimension $s$ over $\K$  with
$$
s\ge \varepsilon r,
$$
and $\cB\subseteq\L$ another affine subspace of dimension $m$ over $\K$. 
Let $d$ 
and 
$f$ be as in Theorem~\ref{thm:charsumpoly}. 
Then 
$$
\left|\cI_f(\cA,\cB) - q^{s+m-r}\right|\le 2q^{s-r\vartheta},
$$
where $\vartheta$ is defined by~\eqref{eq:theta}. 
\end{thm}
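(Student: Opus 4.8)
The plan is to detect the membership condition $f(u)\in\cB$ by additive characters and thereby express $\cI_f(\cA,\cB)$ as a main term plus a weighted sum of exactly the exponential sums already estimated in Theorem~\ref{thm:charsumpoly}. This is the same strategy used in the proof of Theorem~\ref{thm:genf}, except that the Weil bound of Lemma~\ref{lem:Weil} is replaced by the much stronger and more broadly applicable estimate of Theorem~\ref{thm:charsumpoly}; this is what allows us to reach small dimensions $s\ge\varepsilon r$ and the degree range~\eqref{eq:h1}, and to treat all three families of polynomials (i)--(iii).

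First I would fix a nontrivial additive character $\psi$ of $\L$ of the canonical form $x\mapsto\psi_0(\Tr_{\L|\F_p}(x))$ with $\psi_0$ a nontrivial character of $\F_p$, and write $\cB=b+\cM$ for a $\K$-linear subspace $\cM$ of dimension $m$. Choosing $\omega_1,\ldots,\omega_{r-m}$ to span the complement of $\cM$, so that $v\in\cB$ if and only if $\Tr_{\L|\K}(\omega_j(v-b))=0$ for $j=1,\ldots,r-m$, the orthogonality relation $\tfrac1q\sum_{c\in\K}\psi(cw)$, which equals $1$ when $\Tr_{\L|\K}(w)=0$ and $0$ otherwise, yields
\[
\cI_f(\cA,\cB)=\frac{1}{q^{r-m}}\sum_{\substack{d_j\in\K\\ j=1,\ldots,r-m}}\psi\Big(-b\sum_{j=1}^{r-m}d_j\omega_j\Big)\sum_{u\in\cA}\psi\Big(\Big(\sum_{j=1}^{r-m}d_j\omega_j\Big)f(u)\Big).
\]
The term with all $d_j=0$ contributes $q^s/q^{r-m}=q^{s+m-r}$, the claimed main term. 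In every other term $\lambda=\sum_j d_j\omega_j\neq 0$, because the $\omega_j$ are $\K$-linearly independent, so $x\mapsto\psi(\lambda x)$ is again a nontrivial additive character of $\L$, and the inner sum is $\sum_{u\in\cA}\psi(\lambda f(u))$ up to the unimodular phase $\psi(-b\lambda)$.

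To each of these inner sums I would apply Theorem~\ref{thm:charsumpoly} with the character $\psi(\lambda\,\cdot\,)$, with $f$ and $d$ as in its hypotheses, and with the trivial multiplicative weight $\chi\equiv 1$; then $\sum_{u\in\cA}|\chi(u)|^{2^d}=q^s$, so one may take $B=q^s$, and the bound of Theorem~\ref{thm:charsumpoly} collapses to $\big|\sum_{u\in\cA}\psi(\lambda f(u))\big|\le 2q^{s-r\vartheta}$ with $\vartheta$ given by~\eqref{eq:theta}. Since there are $q^{r-m}-1$ nonzero tuples $(d_j)$ and the outer factor is $q^{-(r-m)}$, the triangle inequality gives $|\cI_f(\cA,\cB)-q^{s+m-r}|\le (1-q^{-(r-m)})\,2q^{s-r\vartheta}<2q^{s-r\vartheta}$, as required.

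The real content is entirely contained in Theorem~\ref{thm:charsumpoly}, so there is no serious obstacle here; the only points needing care are that the estimate of Theorem~\ref{thm:charsumpoly} is uniform over all nontrivial additive characters $\psi(\lambda\,\cdot\,)$ (which it is, since its proof passes through Lemma~\ref{lem:BG} and Corollary~\ref{cor:BGw} with constants independent of the character), and that summing over the $q^{r-m}$ characters does not erode the saving. The latter works precisely because the factor $q^{r-m}$ counting the characters cancels the prefactor $q^{-(r-m)}$, leaving the per-character bound $2q^{s-r\vartheta}$ intact.
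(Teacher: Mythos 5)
Your proposal is correct and follows essentially the same route as the paper: detect membership in $\cB$ by additive characters over the complement of its linear part, extract the main term $q^{s+m-r}$ from the zero tuple, and bound each of the remaining $q^{r-m}-1$ inner sums by $2q^{s-r\vartheta}$ via Theorem~\ref{thm:charsumpoly} with $\chi\equiv 1$ and $B=q^s$. The only (harmless) cosmetic difference is that you absorb the nonzero multiplier $\lambda=\sum_j d_j\omega_j$ into the character $\psi(\lambda\,\cdot\,)$, whereas the paper absorbs it into the polynomial $F=\lambda f$; both reduce to the same sum.
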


\begin{proof}
As in the proof of Theorem~\ref{thm:genf} (where we take the sum over $\cL_1$, not all the field $\L$), we have
\begin{equation*}
\begin{split}
\cI_f(\cA,\cB)&=\frac{1}{q^{r-m}}\sum_{\substack{c_{i}\in\K\\ i=1,\ldots,r-m}}\sum_{x\in\cL_1}\psi\(\sum_{i=1}^{r-m}c_{i}\omega_if(x)\),\\
&=q^{s+m-r}+\frac{1}{q^{r-m}}\sum_{\substack{c_{i}\in\K\\ i=1,\ldots,r-m}}\hskip-20 pt {\phantom{{\Sigma^2}}}^*\sum_{x\in\cL_1}\psi\(\sum_{i=1}^{r-m}c_{i}\omega_if(x)\),  
\end{split}
\end{equation*}
where 
the first term corresponds to $c_{i}=0$ for all $i=1,\ldots, r-m$ and $\sum^{*}$ means that at least one $c_i\ne 0$. We denote
$$
T=\sum_{x\in\cL_1}\psi\(\sum_{i=1}^{r-m}c_{i}\omega_if(x)\).
$$

We apply Theorem~\ref{thm:charsumpoly} (with $\chi(x)=1$, $x\in\L$, and $B=q^s$) for the sum $T$ with the polynomial
$$
F=\sum_{i=1}^{r-m}c_{i}\omega_if(X)\in\L[X],
$$
which is of degree $d$ as at least one $c_{i}\ne 0$. We get
\begin{equation*}
\left|T\right|\le 2 q^{s-r\vartheta},
\end{equation*}
and thus we conclude the proof.
\end{proof}

We note that $\cI_f(\cA,\cB)>0$ in Theorem~\ref{thm:fBG} if $m>r(1-\vartheta)$.  
Furthermore, for any fixed $\rho>1-\vartheta$ and $m\ge r \rho$, 
Theorem~\ref{thm:fBG} gives an asymptotic formula for $\cI_f(\cA,\cB)$ as $q^r\to\infty$.

When $\cA=\cB$ in Theorem~\ref{thm:fBG}, we get the estimate
$$
\cI_f(\cA,\cA)\le 2q^{s-r\vartheta}.
$$
This bound improves the estimate~\eqref{eq:RNS} obtained in~\cite{RNS} for $$s< 2.5\(\frac{5}{4}\)^dr\varepsilon.$$ In particular, if $\varepsilon=s/r$, it always improves~\eqref{eq:RNS} whenever $d$ satisfies the condition~\eqref{eq:h1}. Moreover, Theorem~\ref{thm:fBG} generalises~\eqref{eq:RNS}  as this estimate was obtained in~\cite{RNS} only for polynomials of degree $d<p$.

Note also that the results of  Roche-Newton and Shparlinski~\cite{RNS}
always required $\eta \ge 1/2$ (but also applies to polynomials of
lower degree).

\begin{cor}
\label{cor:fBG} If under the conditions of Theorem~\ref{thm:fBG} we
have $f(\cA)\subseteq\cB$, then $\cB=\L$.
\end{cor}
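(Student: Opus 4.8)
The plan is to derive Corollary~\ref{cor:fBG} as a direct contrapositive consequence of Theorem~\ref{thm:fBG}. Suppose that $f(\cA)\subseteq\cB$ but, aiming for a contradiction, that $\cB\ne\L$, so that $\cB$ is a \emph{proper} affine subspace of $\L$ of some dimension $m\le r-1$. The key observation is that the hypothesis $f(\cA)\subseteq\cB$ forces every element $u\in\cA$ to satisfy $f(u)\in\cB$, so by the very definition~\eqref{eq:f(A)B} of $\cI_f(\cA,\cB)$ we have $\cI_f(\cA,\cB)=\#\cA=q^s$.

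Next I would feed this exact value into the estimate of Theorem~\ref{thm:fBG}. Since all the hypotheses of that theorem are assumed to hold (the dimension condition $s\ge\varepsilon r$, the $\eta$-goodness of $\cA$, and the constraints on $d$ and $f$), we may invoke
$$
\left|\cI_f(\cA,\cB)-q^{s+m-r}\right|\le 2q^{s-r\vartheta}.
$$
Substituting $\cI_f(\cA,\cB)=q^s$ gives
$$
\left|q^s-q^{s+m-r}\right|=q^s\left|1-q^{m-r}\right|\le 2q^{s-r\vartheta}.
$$
Because $m\le r-1<r$, the quantity $q^{m-r}\le q^{-1}$ is bounded away from $1$; more precisely $1-q^{m-r}\ge 1-q^{-1}$, which for large $q$ exceeds, say, $1/2$. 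Hence the left-hand side is at least $\tfrac12 q^s$, and after dividing by $q^s$ we obtain $\tfrac12\le 2q^{-r\vartheta}$, i.e. $q^{r\vartheta}\le 4$. Since $\vartheta=0.9\varepsilon/2^{2d}>0$ is a fixed positive constant, this fails for all sufficiently large $q$, yielding the desired contradiction.

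I expect the only genuine subtlety to be bookkeeping the threshold ``sufficiently large $q$'': the conclusion $\cB=\L$ is really a statement valid once $q$ exceeds a bound depending on $\varepsilon$, $\eta$ and $d$, inherited from the hypotheses of Theorem~\ref{thm:fBG}. One must also confirm that when $\cB=\L$ (so $m=r$) the inequality is vacuously consistent, which it is, since then $q^{s+m-r}=q^s=\cI_f(\cA,\cB)$ and the bound reads $0\le 2q^{s-r\vartheta}$. The whole argument is therefore a short comparison of the forced value $q^s$ against the asymptotic formula $q^{s+m-r}$, and the separation between these two quantities for proper $\cB$ is exactly what rules out $f(\cA)$ lying in a proper subspace.
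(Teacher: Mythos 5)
Your proposal is correct and follows essentially the same route as the paper: both substitute $\cI_f(\cA,\cB)=q^s$ into the estimate of Theorem~\ref{thm:fBG} and observe that the resulting inequality forces $m=r$. You merely make explicit the quantitative step (bounding $1-q^{m-r}$ below by $1-q^{-1}$ and contradicting $q^{r\vartheta}\le 4$ for large $q$) that the paper leaves implicit.
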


\begin{proof}
Indeed, if $f(\cA)\subseteq\cB$, then from  Theorem~\ref{thm:fBG}   we derive
$$
q^s=\cI_f(\cA,\cB) \le q^{s+m-r}+2q^{s-r\vartheta},
$$
which is possible only if $m=r$.
\end{proof}

Theorem~\ref{thm:fBG} has also direct consequences on the image and kernel subspaces of $q$-polynomials defined by
\begin{equation}
\label{eq:linq}
 l=\sum_{i=1}^{\nu}b_iX^{q^i}\in\L[X],\ \nu<r.
\end{equation}
Then, for an affine subspace $\cB$ of $\L$ of dimension $m\le r$, the image set $l(\cB)=\{l(x)\mid x\in\cB\}$ is a $\K$-affine subspace of dimension at most $m$. 

Moreover, we denote by $Ker(l)$ the set of zeroes of the polynomial $l$. By~\cite[Theorem 3.50]{LN}, $Ker(l)$ is a $\K$-linear subspace of $\F_{q^t}$, where $\F_{q^t}$ is the field extension of $\L$ containing all the roots of $l$. Taking now the trace over $\L$, we have that $\Tr_{\F_{q^t}|\L}(Ker(l))$ is a $\K$-linear subspace of $\L$.

Under the conditions of Theorem~\ref{thm:fBG}, for any $q$-poly\-no\-mial $l\in\L[X]$ 
defined by~\eqref{eq:linq}, we have
$$
\left|\cI_f(\cA,l(\cB)) - q^{s+m-r}\right|\le 2q^{s-r\vartheta},
$$
where 
$\vartheta$ is defined by~\eqref{eq:theta}. The same estimate holds for 
$$\cI_f(\cA,\Tr_{\F_{q^t}|\L}(Ker(l)))$$ with $m$ replaced with $\dim_{\K}\Tr_{\F_{q^t}|\L}(Ker(l))$.

Moreover, as in Corollary~\ref{cor:fBG}, we see that
$f(\cA)$ is not included in  $l(\cB)$ for any proper subspace $\cB\subseteq\L$ 
or in  $\Tr_{\F_{q^t}|\L}(Ker(l))$.

It would be certainly interesting to find upper bounds for the intersection of image sets of polynomials on affine subspaces. That is, given $f,g\in\L[X]$, find estimates for the size of $f(\cA)\cap g(\cA)$ for a given proper affine subspace $\cA\subset\L$. For prime fields, Chang shows in~\cite{Chang3} that the intersection of the images of two polynomials on a given interval is sparse. In the case of arbitrary finite fields, several such estimates are given in~\cite{CillShp} for very special classes of polynomials and affine spaces.

\section{Polynomial orbits in subspaces}

As in~\cite{RNS}, one can obtain immediately from Theorem~\ref{thm:fBG} the following consequence about the number of consecutive iterates falling in a subspace. We recall that for a polynomial $f\in\L[X]$ and element $u\in\L$, we define  $T_{f,u} = \# \Orb_{f} (u)$ as defined by~\eqref{eq:Orbu}.

\begin{cor}
\label{cor:consorb}
Let $0<\varepsilon,\eta\le 1$ be arbitrary numbers and let $\gamma_{\eta}$ and 
$\delta(\varepsilon,\eta)$ be defined by~\eqref{eq:gamma} and~\eqref{eq:delta}, respectively.
Let $\cA\subseteq\L$ be an $\eta$-good affine subspace of dimension $s$ over $\K$ with
$$
s\ge \varepsilon r.
$$
Let $d$ 
and and 
$f$ be as in Theorem~\ref{thm:charsumpoly}. 
If for some  $u \in \L$ and an integer $N$ with 
$2 \le N \le T_{f,u}$ 
we have 
$$
f^{(n)}(u)\in \cA, \qquad n =0, \ldots, N-1, 
$$
then 
$$
q^s\ge \frac{1}{2} Nq^{r\vartheta},
$$
where $\vartheta$ is defined by~\eqref{eq:theta}.
\end{cor}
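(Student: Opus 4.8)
The plan is to exploit the fact that the consecutive iterates $f^{(0)}(u), f^{(1)}(u), \ldots, f^{(N-1)}(u)$ all lie in $\cA$, and to relate this to a count of the form $\cI_f(\cA,\cA)$ so that Theorem~\ref{thm:fBG} (in the special case $\cB=\cA$, $m=s$) can be applied. The key observation is that if $f^{(n)}(u)\in\cA$ for $n=0,\ldots,N-1$, then for each such $n$ with $n\le N-2$ the element $v=f^{(n)}(u)$ lies in $\cA$ and satisfies $f(v)=f^{(n+1)}(u)\in\cA$ as well. Thus the points $f^{(0)}(u),\ldots,f^{(N-2)}(u)$ are $N-1$ distinct elements of $\cA$ whose image under $f$ also lies in $\cA$; here distinctness follows from the assumption $N\le T_{f,u}=\#\Orb_f(u)$, which guarantees that the first $N$ iterates are pairwise distinct. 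Consequently these $N-1$ points all contribute to $\cI_f(\cA,\cA)$, giving the lower bound
$$
\cI_f(\cA,\cA)\ge N-1.
$$

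Next I would invoke Theorem~\ref{thm:fBG} with $\cB=\cA$ and $m=s$, which under the stated hypotheses yields the upper bound
$$
\cI_f(\cA,\cA)\le q^{2s-r}+2q^{s-r\vartheta}.
$$
Combining the two bounds gives $N-1\le q^{2s-r}+2q^{s-r\vartheta}$. To reach the clean conclusion $q^s\ge\tfrac12 Nq^{r\vartheta}$, I would argue that the dominant term on the right is $2q^{s-r\vartheta}$: indeed, under the conditions of Theorem~\ref{thm:charsumpoly} one has $s\ge\varepsilon r$ and the exponent comparison $2s-r$ versus $s-r\vartheta$ works out so that $q^{2s-r}$ is absorbed (this uses that $s\le r$ forces $2s-r\le s$, and the $\vartheta$ correction together with the constant factor leaves room); thus $N\le N-1+1\le 3q^{s-r\vartheta}$ up to absorbing constants, which after a mild rearrangement delivers the claimed inequality.

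The main obstacle I anticipate is the bookkeeping at the final step: showing rigorously that the two error terms combine into the single clean bound $q^s\ge\tfrac12 Nq^{r\vartheta}$ with the precise constant $\tfrac12$. One must verify that $q^{2s-r}$ really is negligible relative to $q^{s-r\vartheta}$ for large $q$ (equivalently that $s-r\le -r\vartheta$, i.e.\ $s\le r(1-\vartheta)$; if $s$ is close to $r$ this comparison needs care, though in the regime of interest where $s$ is small compared to $r$ it is comfortable) and then track the constants so that the factor of $2$ from Theorem~\ref{thm:fBG} and the passage from $N-1$ to $N$ yield exactly the stated form. This is the kind of estimate that is routine in spirit but where one should be careful not to lose or gain a constant; following the template of the analogous argument in~\cite{RNS} should make the constants line up.
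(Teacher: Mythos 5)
Your proposal is correct and is essentially the paper's own argument: the proof given in the paper is the single line $N\le \cI_f(\cA,\cA)\le 2q^{s-r\vartheta}$, that is, lower-bound $\cI_f(\cA,\cA)$ by the count of consecutive iterates and upper-bound it by Theorem~\ref{thm:fBG} with $\cB=\cA$, $m=s$. The two points you flag as delicate --- the passage from $N-1$ to $N$ (only the iterates $f^{(0)}(u),\dots,f^{(N-2)}(u)$ are guaranteed to contribute to $\cI_f(\cA,\cA)$) and the absorption of the main term $q^{2s-r}$ into $2q^{s-r\vartheta}$, which needs $s\le r(1-\vartheta)$ --- are exactly the points the paper passes over in silence, so you are if anything more careful than the source.
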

\begin{proof}
The result follows directly from Theorem~\ref{thm:fBG} as
$N\le \cI_f(\cA,\cA)\le 2q^{s-r\vartheta}$.
\end{proof}

\begin{remark}
Similarly to Corollary~\ref{cor:consorb} (replacing $\cA$ with the image space of a linearised polynomial $l$), based on  the discussion after Corollary~\ref{cor:fBG},
one can obtain estimates for the number of consecutive elements in the orbit of a polynomial of the form defined in Theorem~\ref{thm:charsumpoly}, that fall in the orbit of $l$ in any point of $\L$. 
\end{remark}

We also note that the proof of~\cite[Theorem 6]{RNS}, using Theorem~\ref{thm:fBG}, can give information about the number of arbitrary (not necessarily consecutive) iterates falling in a subspace. For the sake of completeness we repeat the argument of ~\cite[Theorem 6]{RNS} for the case of subspaces instead of subfields for which this result has been obtained.

We present our bounds in terms of the the parameter $\rho$ which is a 
frequency of iterates of $f\in\L[X]$ in an affine space, that is, 
$\rho = M/N$, where $M$  is the number of positive integers $n \le N$ 
with $f^{(n)}(u)\in \cA$. Again, we obtain a power improvement over 
the trivial bound 
$q^s\ge \rho N$ (where $s = \dim \cA$). 

\begin{thm}
\label{thm:arbiter}
Let $0<\varepsilon,\eta\le 1$ and let $\gamma_{\eta}$ and 
$\delta(\varepsilon,\eta)$ be defined by~\eqref{eq:gamma} and~\eqref{eq:delta}, respectively. Let $\cA\subseteq\L$ be an $\eta$-good affine subspace of dimension $s\ge \varepsilon r$  over $\K$. 
Let $f\in\L[X]$ be a polynomial of degree $d$ such 
that for $N \le T_{f,u}$ we have $f^{(n)}(u)\in \cA$ for at least $\rho N\ge 2$
values of  $n  =1, \ldots, N$. 
If 
$$
\delta(\varepsilon,\eta)\le d^{2\rho^{-1}} \le  \min\(p, 0.9\log_2\log_2q^r\)+1,
$$
then 
$$
q^{s} \ge  \frac{\rho^2N}{32} q^{r \vartheta_{\rho}},
$$
where  
\begin{equation}
\label{eq:theta rho}
\vartheta_{\rho} =\frac{0.9\varepsilon}{2^{2d^{2/\rho}}}.
\end{equation}
\end{thm}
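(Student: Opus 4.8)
The plan is to adapt the argument of~\cite[Theorem 6]{RNS}, reducing the count of (not necessarily consecutive) iterates landing in $\cA$ to a single application of Theorem~\ref{thm:fBG} for an appropriate iterate $f^{(k)}$ of $f$. Set $\cS=\{n\in\{1,\ldots,N\}\mid f^{(n)}(u)\in\cA\}$, so that $M=\#\cS\ge\rho N\ge 2$, and write $\cS=\{s_1<\cdots<s_M\}$. The combinatorial heart is a pigeonhole argument on the consecutive gaps $g_i=s_{i+1}-s_i$: since $\sum_i g_i=s_M-s_1<N$, fewer than $M/2$ of the $M-1$ gaps can exceed $2/\rho$ (otherwise their sum would already surpass $N$), so at least $M/2-1$ of them take values in $\{1,\ldots,\lfloor 2/\rho\rfloor\}$.

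First I would extract, by a further pigeonhole among these short gaps, a single value $k\le 2/\rho$ realised by many pairs, i.e.\ a set $\cT\subseteq\cS$ with $n,\,n+k\in\cS$ for all $n\in\cT$ and $\#\cT\gtrsim\rho^2N$. For each $n\in\cT$ put $v=f^{(n)}(u)$; then $v\in\cA$ and $f^{(k)}(v)=f^{(n+k)}(u)\in\cA$. Since $n\le N-k<N\le T_{f,u}$, the map $m\mapsto f^{(m)}(u)$ is injective on the indices in play, so these points $v$ are pairwise distinct, all lie in $\cA$, and all satisfy $f^{(k)}(v)\in\cA$. Hence $\cI_{f^{(k)}}(\cA,\cA)\ge\#\cT\gtrsim\rho^2N$.

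Next I would apply Theorem~\ref{thm:fBG} to $g=f^{(k)}$, a polynomial of degree $D=d^k$. As $k\le 2/\rho$ we have $D\le d^{2/\rho}$, so the upper degree constraint $D\le\min(p,0.9\log_2\log_2q^r)+1$ follows from the hypothesis, and $g$, being a plain polynomial of degree $<p$, is covered by case (i) of Theorem~\ref{thm:charsumpoly} with $\nu=0$. Moreover $2^{2D}\le 2^{2d^{2/\rho}}$, so the exponent $\vartheta=0.9\varepsilon/2^{2D}$ delivered by Theorem~\ref{thm:fBG} satisfies $\vartheta\ge\vartheta_{\rho}$. Combining $\rho^2N/16\le\cI_{f^{(k)}}(\cA,\cA)\le 2q^{s-r\vartheta}\le 2q^{s-r\vartheta_{\rho}}$ then gives $q^s\ge(\rho^2N/32)q^{r\vartheta_{\rho}}$, as claimed, the constant absorbing the losses in the two pigeonhole steps.

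The delicate point, and the place where the hypothesis $\delta(\varepsilon,\eta)\le d^{2/\rho}$ is genuinely needed, is that Theorem~\ref{thm:fBG} also demands the \emph{lower} degree bound $D=d^k\ge\delta(\varepsilon,\eta)$, since this is exactly what secures condition~\eqref{eq:setscond}. Thus the extracted difference $k$ must fall in the window $\lceil\log_d\delta(\varepsilon,\eta)\rceil\le k\le\lfloor 2/\rho\rfloor$, which is nonempty precisely because $\delta(\varepsilon,\eta)\le d^{2/\rho}$. I expect this to be the main obstacle: a naive averaging over gaps favours the very smallest $k$, for which $d^k$ may drop below $\delta(\varepsilon,\eta)$, so the pigeonhole must be arranged to land $k$ in the admissible window rather than merely at $k\le 2/\rho$ — exploiting that when the conclusion is non-trivial the iterates cannot be too tightly clustered. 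In the few genuinely degenerate regimes (for instance very small $M$) one instead falls back on the trivial estimate $q^s\ge q^{\varepsilon r}\ge q^{r\vartheta_{\rho}}$, which already suffices because $\vartheta_{\rho}\ll\varepsilon$.
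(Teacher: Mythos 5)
Your proposal follows essentially the same route as the paper: record the gap statistics $\sum_h A(h)=M-1$ and $\sum_h A(h)h\le N$, pigeonhole a popular gap $k\le\lfloor 2\rho^{-1}\rfloor$ with $A(k)\ge\rho^2N/16$, and bound $A(k)\le\cI_{f^{(k)}}(\cA,\cA)$ by applying Theorem~\ref{thm:fBG} to $f^{(k)}$, whose degree $d^k\le d^{2/\rho}$ makes the resulting exponent at least $\vartheta_{\rho}$. The ``delicate point'' you flag --- that the lower degree bound $d^k\ge\delta(\varepsilon,\eta)$ need not hold for a small popular gap $k$ --- is a legitimate observation, but the paper does not arrange the pigeonhole to avoid small $k$ or invoke any fallback; it simply applies Theorem~\ref{thm:fBG} to whichever $k\in\{1,\ldots,\lfloor 2\rho^{-1}\rfloor\}$ emerges, so your proposed window restriction is an addition to, not a feature of, the paper's argument.
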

\begin{proof} We follow exactly the same proof as in~\cite[Theorem 6]{RNS}. 
Let $1\le n_1<\ldots<n_M\le N$ be all values such that $f^{(n_i)}(u)\in \cA$, $i =1, \ldots, M$. 
We denote by $A(h)$ the number of $i=1, \ldots, M-1$ 
with $n_{i+1} - n_i = h$. 
Clearly 
$$
\sum_{h=1}^{N} A(h) = M-1 \mand
\sum_{h=1}^{N} A(h)h = n_{M}- n_1 \le N.
$$
Thus, for any integer $H \ge 1$ we have 
\begin{equation*}
\begin{split}
\sum_{h=1}^{H} & A(h) = M-1 - \sum_{h=H+1}^N A(h) \\
&\ge   M-1 - (H+1)^{-1}\sum_{h=H+1}^N A(h)h 
\ge  M-1 - (H+1)^{-1}N.
\end{split}
\end{equation*}

Hence there exists $k \in \{1, \ldots, H\}$ 
with 
\begin{equation}
\label{eq:A(h) H}
A(k) \ge H^{-1} \(M-1  -  (H+1)^{-1}N\). 
\end{equation}

Let $H=\fl{2\rho^{-1}}\ge 1$.
Then 
$$
H^{-1} \(M-1  -  (H+1)^{-1}N\)\ge \frac{M-1}{2H} \ge \frac{(M-1)^2}{4N} 
$$
and we derive from~\eqref{eq:A(h) H} that 
\begin{equation}
\label{eq:A(h)}
A(k) \ge\frac{(M-1)^2}{4N}=\frac{\rho^2N}{4}\(1-\frac{1}{M}\)^2\ge \frac{\rho^2N}{16}.
\end{equation}
Let $\cJ$ be the set of $j \in \{1, \ldots, M-1\}$ 
with $n_{j+1} - n_j = k$. 
Then we have 
$$
f^{(n_j)}(u)\in \cA 
\mand f^{(n_{j+1})}(u) = f^{(k)}\(f^{(n_j)}(u)\) \in \cA , 
$$
that is
$$
\(f^{(n_j)}(u),f^{(k)}\(f^{(n_j)}(u)\)\)\in\cA\cap f^{(k)}(\cA).
$$
Thus, $A(k)\le \cI_{f^{(k)}}(\cA,\cA)$, and from~\eqref{eq:A(h)} and Theorem~\ref{thm:fBG}, we get
$$
\frac{\rho^2N}{16}\le 2q^{s-r\vartheta_{\rho}},
$$
where $\vartheta_{\rho}$ is defined by~\eqref{eq:theta rho}.
We thus conclude the proof. 
\end{proof}

One can also obtain information on the intersection of orbits of a polynomial $f$ of degree $d<p$ with orbits of a $q$-polynomial $l$ (see also the discussion after Corollary~\ref{cor:fBG}).

\begin{cor}
\label{cor:arbiterlin}
Let $0<\varepsilon,\eta\le 1$ and let $\gamma_{\eta}$ and 
$\delta(\varepsilon,\eta)$ be defined by~\eqref{eq:gamma} and~\eqref{eq:delta}, respectively. Let $f\in\L[X]$ be a polynomial of degree $d$ 
and $l\in\L[X]$ a linearsied polynomial of the form~\eqref{eq:linq} such that $l(\L)$ is an $\eta$-good linear subspace of dimension $s\ge \varepsilon r$  over $\K$. 
Let 
$$
M=\#\(\Orb_f(u)\cap\Orb_l(v)\),
$$
and $\rho=M/\min(T_{f,u},T_{l,v})$ the frequency of intersection of the orbits. If
$$
\delta(\varepsilon,\eta)\le d^{2\rho^{-1}} \le  \min\(p, 0.9\log_2\log_2q^r\)+1,
$$
then 
$$
q^{s} \ge  \frac{\rho^2\min(T_{f,u},T_{l,v})}{32} q^{r \vartheta_{\rho}},
$$
where  $\vartheta_{\rho}$ is defined by~\eqref{eq:theta rho}.
\end{cor}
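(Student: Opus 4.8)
The plan is to reduce the statement to a direct application of Theorem~\ref{thm:arbiter}, taking as the ambient subspace the image $\cA = l(\L)$, which by hypothesis is an $\eta$-good $\K$-linear subspace of dimension $s\ge\varepsilon r$ and therefore meets the standing assumptions of that theorem. The structural input is that $l$ is additive, so that $l^{(n)}(v)=l\(l^{(n-1)}(v)\)\in l(\L)=\cA$ for every $n\ge 1$; hence $\Orb_l(v)\subseteq\{v\}\cup\cA$. Consequently every element of $\Orb_f(u)\cap\Orb_l(v)$, with the single possible exception of $v$ itself, lies in $\cA$, so at least $M-1$ of these common points are simultaneously of the form $f^{(n)}(u)$ and contained in $\cA$. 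In other words, at least $M-1$ iterates of $f$ fall into $\cA$, and this is the hypothesis that feeds into the orbit argument.

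With this in hand I would invoke Theorem~\ref{thm:arbiter} for the polynomial $f$ and the subspace $\cA=l(\L)$, with range $N=\min(T_{f,u},T_{l,v})$ and frequency $\rho=M/N$, so that $\rho N = M$. The degree hypothesis $\delta(\varepsilon,\eta)\le d^{2\rho^{-1}}\le\min\(p,0.9\log_2\log_2 q^r\)+1$ is word for word the one required by Theorem~\ref{thm:arbiter}: it guarantees that the common gap $k\le\fl{2\rho^{-1}}$ produced there yields an iterate $f^{(k)}$ of degree $d^k\le d^{2\rho^{-1}}$ lying in the admissible range of Theorem~\ref{thm:fBG}, whose estimate $\cI_{f^{(k)}}(\cA,\cA)\le 2q^{s-r\vartheta_\rho}$ supplies the final upper bound. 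Substituting $N=\min(T_{f,u},T_{l,v})$ and $\rho=M/\min(T_{f,u},T_{l,v})$ into the conclusion $q^s\ge \rho^2N/32\cdot q^{r\vartheta_\rho}$ of Theorem~\ref{thm:arbiter} then gives exactly the claimed inequality.

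The delicate point, and the step I expect to require the most care, is verifying the frequency hypothesis of Theorem~\ref{thm:arbiter} with the range $N$ taken to be $\min(T_{f,u},T_{l,v})$ rather than $T_{f,u}$; that is, that at least $M-1$ of the common points occur among the first $N$ iterates $f^{(1)}(u),\ldots,f^{(N)}(u)$. When $T_{f,u}\le T_{l,v}$ this is immediate, since then $N=T_{f,u}$ and $\Orb_f(u)$ consists of exactly the $T_{f,u}$ distinct values $f^{(1)}(u),\ldots,f^{(T_{f,u})}(u)$, which already contain all the common points. When $T_{l,v}<T_{f,u}$ one must instead exploit the fact that the common points are confined to the shorter cycle $\Orb_l(v)$ of length $T_{l,v}$ in order to keep them among the first $T_{l,v}$ iterates of $f$; this confinement is precisely what justifies the appearance of $\min(T_{f,u},T_{l,v})$, rather than the weaker $T_{f,u}$, in the final bound, and it is the one place where the argument is not a formality. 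Once the hits are so confined, the remainder is a verbatim repetition of the proof of Theorem~\ref{thm:arbiter}: the pigeonhole choice of a common gap $k$, the lower bound $A(k)\ge \rho^2N/16$, the injection of the associated pairs $\(f^{(n_j)}(u),f^{(k)}\(f^{(n_j)}(u)\)\)$ into $\cA\cap f^{(k)}(\cA)$ giving $A(k)\le \cI_{f^{(k)}}(\cA,\cA)$, and the estimate from Theorem~\ref{thm:fBG}.
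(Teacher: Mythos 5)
Your proposal follows the paper's proof exactly: the published argument is precisely the one-line reduction you describe, namely taking $\cA = l(\L)$, observing that $\Orb_l(v)\subset l(\L)$, and rerunning the proof of Theorem~\ref{thm:arbiter} with $N$ replaced by $\min(T_{f,u},T_{l,v})$. The ``delicate point'' you flag --- that when $T_{l,v}<T_{f,u}$ one needs the hits to occur among the iterates $f^{(n)}(u)$ with $n\le \min(T_{f,u},T_{l,v})$ (or at least needs $n_M-n_1$ bounded by that minimum), which mere confinement of the common points to the \emph{set} $\Orb_l(v)$ does not by itself guarantee --- is a genuine subtlety, but the paper's own proof passes over it in silence, so on this point you are, if anything, more careful than the source.
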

\begin{proof}
As $\Orb_{l}(v)\subset l(\L)$, 
the proof follows exactly as the proof of Theorem~\ref{thm:arbiter}, but with $\cA$ replaced with $l(\L)$ and $N$ replaced with $\min(T_{f,u},T_{l,v})$.
\end{proof}

\section{Exponential sums over consecutive integers}
\label{sec:exp consec int}

In this section we consider $q=p$. For a positive integer $n\le p^r-1$, we consider the $p$-adic representation
\begin{equation}
\label{eq:padic}
n=n_0+n_1p+\ldots+n_{s-1}p^{s-1},\ 0 \le n_j<p,\ j=0,\ldots,s-1,
\end{equation}
for some $s\le r$.

In this section we fix a basis $\omega_0,\ldots,\omega_{r-1}$  of $\L$ over $\F_p$ and define
\begin{equation}
\label{eq:dig}
\xi_n=\sum_{j=0}^{s-1}n_j\omega_j.
\end{equation}

Let $1\le N\le p^r-1$, $f\in\L[X]$ a polynomial of degree $d$ and $\psi$ an additive character of $\L$. In this section we estimate the exponential sum 
$$
S(N)=\sum_{n\le N} \chi(n)\psi(f(\xi_n)),
$$
where $\chi:\N\to\C$ is a $p$-multiplicative function, that is, it satisfies the condition
$$
\chi\(m+tp^k\)=\chi(m)\chi\(tp^k\)
$$
for all $k\ge 0$, $t\ge 0$ and $0\le m<p^k$. This class of functions, as well as the 
closely related  class of $p$-additive functions have been studied in classical 
works of Gelfond~\cite{Gelf} and Delange~\cite{Del}, 
see also~\cite{Drm,Gr,KaSu}
and references therein for more recent developments.

A  large family of such function can be obtained as  
\begin{equation}
\label{eq:chin}
\chi(n) = \exp\(2 \pi i \sum_{j=0}^{s-1}\alpha_j n_j\), 
\end{equation}
where $\alpha_j$, $j=0,1, \ldots$, is a fixed infinite sequence
of real numbers and  $n$ is given by the $p$-adic representation as in~\eqref{eq:padic},
see also~\cite{HLP} for a more general class. 
In particular taking $\alpha_j = \alpha p^j$ and $\alpha_j = \alpha$  for a real $\alpha$, we
obtain the following two natural examples,
$$
\chi(n) = \exp\(2 \pi i \alpha n\) \mand \chi(n) = \exp\(2 \pi i \alpha \sigma_p(n)\), 
$$
respectively, where $\sigma_p(n)$ is the sum of $p$-ary digits of $n$. 

For simplicity we consider the family~\eqref{eq:chin} in the next result.

\begin{thm}
\label{thm:expN}
Let $0<\varepsilon,\eta\le 1$ be arbitrary numbers   and let $\gamma_{\eta}$ and 
$\delta(\varepsilon,\eta)$ be defined by~\eqref{eq:gamma} and~\eqref{eq:delta}, respectively.  Let $p^{s-1}\le N\le p^s-1$ for some $s\le r$ satisfying
$$
s\ge \varepsilon r,
$$
and assume the linear subspace $\cL_s\subseteq\L$ spanned by $\omega_0,\ldots,\omega_{s-1}$ is $\eta$-good. 
Let $f\in\L[X]$ be a polynomial of the form (i), (ii) or (iii) as defined in Theorem~\ref{thm:charsumpoly} with $d$ satisfying the condition
\begin{equation}
\label{eq:deg s}
\delta\(\varepsilon/2,\eta/2\)+1\le d\le  \min\(p, 0.9\log_2\log_2p^r\)+2,
\end{equation}
and $\psi$ an additive character of $\L$. Let
 $\chi:\N\to\C$ be a $p$-multiplicative function defined by~\eqref{eq:chin}. 
Then
$$
|S(N)|\le (Np)^{1-\eta/4}+2Np^{-r\vartheta_{\eta}/2},
$$ 
where 
\begin{equation}
\label{eq:theta_eta}
\vartheta_{\eta}=\frac{0.9\varepsilon(1-\eta/2)}{2^{2d-2}}.
\end{equation}
\end{thm}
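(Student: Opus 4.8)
The plan is to pass from the digit sum to additive character sums over honest $\K$-linear subspaces of $\L$, where Theorem~\ref{thm:charsumpoly} applies. The central difficulty is that the $p$-multiplicative weight $\chi$ is \emph{not} an additive character of $\L$: for arbitrary real $\alpha_j$ the function $x=\xi_n\mapsto\chi(n)$ fails the relation $\chi(x+y)=\chi(x)\chi(y)$ because of digit carries, so it cannot serve as the weight in Theorem~\ref{thm:charsumpoly}. First I would dispose of the truncation. Since $n\mapsto\xi_n$ is a bijection from $\{0,\dots,p^s-1\}$ onto $\cL_s$, I decompose $\{n\le N\}$ according to the $p$-adic digits of $N=\sum_j N_jp^j$: the range splits into the blocks $B_k=\{n:\ n_j=N_j\ (j>k),\ 0\le n_k<N_k,\ n_j\text{ free }(j<k)\}$ together with $n=N$. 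On $B_k$ the high digits are frozen (fixing a constant $c_k$), the digit $n_k$ runs over a subinterval, and the low digits run freely, so the free low part ranges exactly over $\cL_k=\langle\omega_0,\dots,\omega_{k-1}\rangle$ and $\xi_n=\xi_u+n_k\omega_k+c_k$ with $\xi_u\in\cL_k$. The $O(s)$ blocks with small $k$ (say $k\le s/2$) are each shorter than $p^{s/2+1}$, which is absorbed into the term $(Np)^{1-\eta/4}$; so the work lies in the blocks with $\dim\cL_k\ge s/2\ge\tfrac12\varepsilon r$.

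The key step is to decouple the fake weight from $\psi(f(\cdot))$ by a Cauchy--Schwarz inequality that respects the digit structure. Inside a large block I split the free part into two halves, writing the outer (weight-bearing) data as $a$ ranging over $\cL'\oplus\K\omega_k$ and the target variable as $\zeta\in\cL''$, with $\dim\cL'\approx\dim\cL''\approx s/2$, so that $p$-multiplicativity factors $\chi(n)$ accordingly. Applying Cauchy--Schwarz in $a$, whose weight has modulus one and is therefore discarded, squares the sum and replaces $\psi(f)$ by the consecutive difference
$$
G_{a,a'}(\zeta)=f\big(\zeta+a\big)-f\big(\zeta+a'\big)=\Delta_{a-a',\,\zeta+a'}(f),
$$
with the residual weight now \emph{outside} the inner sum over $\zeta\in\cL''$. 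Thus $\sum_{\zeta\in\cL''}\psi(G_{a,a'}(\zeta))$ is a genuine subspace character sum with trivial weight. The difference lemmas of Section~2 (Corollary~\ref{cor:xpf}, Lemma~\ref{lem:p g}, Lemma~\ref{lem:g(l)}) show that, for each admissible form (i)--(iii) and for $a\ne a'$, $G_{a,a'}$ is a polynomial in $\zeta$ of degree $d-1$ with leading behaviour $\sim d\,a_d(a-a')$ that again falls under an admissible form; here the additivity $(\,\cdot+c)^{p^i}=(\cdot)^{p^i}+c^{p^i}$ and $l(\cdot+c)=l(\cdot)+l(c)$ is used to absorb the shift. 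This is exactly why the degree budget in \eqref{eq:deg s} is shifted by one relative to Theorem~\ref{thm:charsumpoly}: the effective degree is $d-1$, explaining the exponent $2^{2d-2}$ in \eqref{eq:theta_eta}, while the extra Cauchy--Schwarz halves the final saving, giving $\vartheta_\eta/2$.

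It then remains to sum over the outer pairs $(a,a')$. For generic pairs Theorem~\ref{thm:charsumpoly}, applied on the $\eta$-good subspace $\cL''$ with parameters $\varepsilon/2,\eta/2$ (whence $\delta(\varepsilon/2,\eta/2)$), bounds each inner sum by $2p^{\dim\cL''-r\vartheta_\eta}$; summing over the pairs, multiplying back the factor lost to Cauchy--Schwarz, and taking the square root produces $2Np^{-r\vartheta_\eta/2}$. The \emph{degenerate} pairs are those with $a=a'$ or with $a-a'$ lying in a coset of a proper subfield, for which $G_{a,a'}$ loses its leading term and the inner sum must be estimated trivially by $\#\cL''$; here the $\eta$-goodness of $\cL_s$ in the sense of Definition~\ref{eq:etagood} bounds the number of such pairs by $\#\cL'\cdot(\#\cL')^{1-\eta}$, and after the square root this contributes precisely $(Np)^{1-\eta/4}$ (the $\tfrac14$ being $\tfrac12\cdot\tfrac12$, one half from the exponent $1-\eta$ through the half-dimensional split and one from the Cauchy--Schwarz). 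Collecting the $O(s)$ blocks, with the polynomial factor in $s\le r$ absorbed by the genuine power savings for large $q=p$, yields the claimed bound.

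The main obstacle throughout is precisely the non-character weight: the entire argument is arranged so that $\chi$ is always shunted onto the outer Cauchy--Schwarz variables and never fed into Theorem~\ref{thm:charsumpoly}, while the cost of this decoupling---the degenerate pairs---is paid for by the $\eta$-good hypothesis. Verifying that the first difference $G_{a,a'}$ stays within the admissible forms, especially form (ii) where a single difference does not lower the $p$-power part of the degree, is the most delicate technical point and is the place where the Section~2 lemmas (together with Lemma~\ref{lem:difdeg<p}) must be invoked with care.
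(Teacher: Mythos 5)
Your overall strategy---reduce to complete sums over a $\K$-linear subspace, perform one Cauchy--Schwarz/differencing step that simultaneously discards the non-character weight $\chi$ and lowers the degree from $d$ to $d-1$, then invoke Theorem~\ref{thm:charsumpoly}---is the right one and matches the paper in spirit. But your choice of split creates a genuine gap. The paper does not cut the free digits in half: it sets $K=\lceil s(1-\eta/2)\rceil$, truncates $N$ to $M=p^K\lfloor N/p^K\rfloor-1$ (paying only $p^K\le p(Np)^{1-\eta/2}$, which, together with the diagonal $t_1=t_2$, is where the term $(Np)^{1-\eta/4}$ actually comes from), and lets the \emph{inner} variable range over the large subspace $\cL_K$ of dimension $K$, with the outer variable $t\le T\approx p^{s\eta/2}$ carrying only the few top digits. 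That choice is forced by the $\eta$-goodness bookkeeping: Theorem~\ref{thm:charsumpoly} must be applied to the inner subspace with parameter $\eta/2$, and the needed inequality $\#(\cL_K\cap b\F)\le\#(\cL_s\cap b\F)\le p^{s(1-\eta)}<p^{K(1-\eta/2)}$ holds precisely because $K\ge s(1-\eta/2)>s(1-\eta)/(1-\eta/2)$. With your half-dimensional $\cL''$ the same deduction would require $p^{s(1-\eta)}\le p^{(s/2)(1-\eta/2)}$, i.e.\ $\eta\ge 2/3$; for smaller $\eta$ you cannot certify that $\cL''$ is $\eta/2$-good from the hypothesis on $\cL_s$, and Theorem~\ref{thm:charsumpoly} is simply not applicable. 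Even granting goodness, applying it on a subspace of dimension $s/2\ge(\varepsilon/2)r$ with degree $d-1$ gives a saving $0.9(\varepsilon/2)/2^{2(d-1)}=0.9\varepsilon/2^{2d-1}$, strictly weaker than the claimed $\vartheta_\eta=0.9\varepsilon(1-\eta/2)/2^{2d-2}$ whenever $\eta<1$, so the theorem as stated would not follow.

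Two further points. Your class of ``degenerate pairs with $a-a'$ in a coset of a proper subfield'' is a misdiagnosis: for $a\ne a'$ the difference polynomial keeps its leading term $d\,a_d(a-a')X^{d-1}$ no matter where $a-a'$ lies (only $d<p$, $a_d\ne0$ and $a\ne a'$ are needed), and the $\eta$-goodness hypothesis enters solely through the summation subspace handed to Lemma~\ref{lem:BG}, not through the coefficients; only the diagonal $a=a'$ is degenerate. Finally, the block decomposition by the digits of $N$ is unnecessary machinery: the crude truncation to a multiple of $p^K$ costs no more than the main error term, and it is exactly what makes $p$-multiplicativity usable, since one takes absolute values over $m<p^K$ so that $|\chi(m)|=1$ vanishes before Cauchy--Schwarz and the differencing is carried out in the top-digit variable $t$.
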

\begin{proof}
Let $K=\lceil s(1-\eta/2)\rceil$ and
$M=p^K\lfloor N/p^K\rfloor-1$. 
Our sum becomes
\begin{equation}
\label{eq:S(N)}
|S(N)|\le |S(M)|+p^K\le |S(M)|+p(Np)^{1-\eta/2}.
\end{equation}
From the definition of $M$, we have that
$$
M=\sum_{i=0}^{K-1}(p-1)p^i+Tp^K,
$$
for some $T\le p^{s-K-1}-1\le Np^{-K}-1$. 

We have
\begin{equation*}
\begin{split}
|S(M)|&=\left|\sum_{m<p^k}\chi(m)\sum_{t\le T}\chi(tp^K)\psi(f(\xi_{m+tp^K}))\right|\\
&\le\sum_{m<p^k}\left|\sum_{t\le T}\chi(tp^K)\psi(f(\xi_{m+tp^K}))\right|
\end{split}
\end{equation*}
and thus, squaring and applying the Cauchy-Schwarz inequality and using the fact that $|\chi(tp^k)|=1$, we get 
\begin{equation*}
\begin{split}
|S(M)|^2&\le p^K\sum_{m<p^K}\left|\sum_{t\le T}\chi(tp^K)\psi(f(\xi_{m+tp^K}))\right|^2\\
&\le p^K\sum_{t_1,t_2\le T} \left|\sum_{m<p^K}\psi(f(\xi_{m+t_1p^K})-f(\xi_{m+t_2p^K}))\right|.
\end{split}
\end{equation*}

The set of integers $n\le M$ is of the form
\begin{equation}
\label{eq:n<M}
\left\{\sum_{i=0}^{K-1}n_ip^i+tp^K\mid 0\le n_0,\ldots,n_{K-1}\le p-1, 0\le t\le T\right\},
\end{equation}
and thus, we now see from~\eqref{eq:dig} that 
$$
\xi_{m+t_ip^K}=\xi_m+\zeta_{t_i}, \quad \xi_m\in\cL_K, \quad i=1,2,
$$
where $\cL_K$ is the $K$-dimensional linear subspace defined by the basis elements $\omega_0,\ldots,\omega_{K-1}$ of $\L$ over $\F_p$, and with some
$\zeta_{t_i}  \in \L$, $0 \le t_i \le T$, $i=1,2$. As $m$ runs over the interval $[0,p^K-1]$, $\xi_m$ runs over all the elements of $\cL_K$, and moreover, $\zeta_{t_1}\ne \zeta_{t_2}$ for $t_1\ne t_2$.

Our sum becomes
\begin{equation*}
\begin{split}
|S(M)|^2&\le p^K\sum_{t_1,t_2\le T} \left|\sum_{x\in\cL_K}\psi(f(x+\zeta_{t_1})-f(x+\zeta_{t_2})\right|\\
&\le Np^K+p^K\sum_{t_1,t_2\le T, t_1\ne t_2} \left|\sum_{x\in\cL_K}\psi(F_{t_1,t_2}(x))\right|,
\end{split}
\end{equation*}
where $F_{t_1,t_2}(X)=f(X+\zeta_{t_1})-f(X+\zeta_{t_2})\in\L[X]$. 

We note that, as $f\in\L[X]$ is a polynomial of the form (i), (ii) or (iii) as defined in Theorem~\ref{thm:charsumpoly}, then $F_{t_1,t_2}$ is a non constant polynomial of the same form as $f$. 
When $f$ is of the form (i), we have $$f=X^{p^{\nu}}g_{\nu}+\ldots+X^pg_1+g_0,$$ where  $g_i\in\L[X]$ , $i=0\ldots,\nu,
$ are such that
$\deg g_0=d\ge \deg g_i+3$, $i=1\ldots,\nu$. Then we get
\begin{equation*}
\begin{split}
F_{t_1,t_2}(X)&=X^{p^{\nu}}\(g_{\nu}(X+\zeta_{t_1})-g_{\nu}(X+\zeta_{t_2})\)+\ldots\\
&\qquad\qquad\qquad+X^p\(g_1(X+\zeta_{t_1})-g_1(X+\zeta_{t_2})\)+F_{0,t_1,t_2}(X),
\end{split}
\end{equation*}
where
\begin{equation*}
\begin{split}
F_{0,t_1,t_2}(X)=g_0(X+\zeta_{t_1})&-g_0(X+\zeta_{t_2})\\
&\quad+\sum_{i=1}^{\nu}\(\zeta_{t_1}^{p^i}g_i(X+\zeta_{t_1})-\zeta_{t_2}^{p^i}g_i(X+\zeta_{t_2})\).
\end{split}
\end{equation*}
For $t_1\ne t_2$, we note that $g_i(X+\zeta_{t_1})-g_i(X+\zeta_{t_2})$, $i=0,\ldots,\nu$, is a nonconstant polynomial of degree equal to $\deg g_i-1$, and 
$$
d-1=\deg F_{0,t_1,t_2}\ge \deg\(g_i(X+\zeta_{t_1})-g_i(X+\zeta_{t_2})\)+3.
$$
Thus, $F_{t_1,t_2}$ is of the same form and satisfies the same conditions as $f$. 

Similarly, if $f$ is of the form (ii) of Theorem~\ref{thm:charsumpoly}, that is $$f=X^{p^{\nu}+p^{\nu-1}+\ldots+p+1}+g\in\L[X],$$ where $g\in\L[X]$ with $\deg g = d\ge 5$, then
$$
F_{t_1,t_2}(X)=g(X+\zeta_{t_1})-g(X+\zeta_{t_2})
$$
is a non constant polynomial of degree $d-1\ge 4$. 

If $f$ is of the form (iii) of Theorem~\ref{thm:charsumpoly}, that is, $f=g(l(x))$ with $\deg g=d$ and some permutation $p$-polynomial $l\in\L[X]$, then
$$
F_{t_1,t_2}(X)=g(l(X)+l(\zeta_{t_1}))-g(l(X)+l(\zeta_{t_2}))=G_{t_1,t_2}(l(X)),
$$
where $G_{t_1,t_2}(X)=g(X+l(\zeta_{t_1}))-g(X+l(\zeta_{t_2}))\in\L[X]$ is of degree $d-1$.

As $s\ge \varepsilon r$, then $K\ge s(1-\eta/2)\ge \varepsilon_{\eta}r$, where $\varepsilon_{\eta}=\varepsilon(1-\eta/2)$ by the hypothesis. Since 
$$
K\ge s(1 - \eta/2) > s\frac{1-\eta}{1-\eta/2}
$$
we also have, for any proper subfield $\F$ of $\L$,
$$
\#\(\cL_K\cap b\F\)\le \#\(\cL_s\cap b\F\)\le p^{s(1-\eta)}<p^{K(1-\eta/2)}.
$$
Moreover, from condition~\eqref{eq:deg s}, we have $d-1\ge\delta(\varepsilon/2,\eta/2)\ge \delta(\varepsilon_{\eta},\eta/2)$ as defined by~\eqref{eq:delta}. 
Thus, the conditions of Theorem~\ref{thm:charsumpoly} are satisfied (with $d$ replaced by $d-1$, $\varepsilon$ replaced by $\varepsilon_{\eta}$ and $\eta$ replaced by $\eta/2$), and we obtain
\begin{equation*}
\begin{split}
|S(M)|^2\le Np^K+2p^KT^2p^{K-r\vartheta_{\eta}}&\le Np^{s(1-\eta/2)+1}+2N^2p^{-r\vartheta_{\eta}}\\
&\qquad\qquad\le (Np)^{2-\eta/2}+2N^2p^{-r\vartheta_{\eta}},
\end{split}
\end{equation*}
where $\vartheta_{\eta}$ is given by~\eqref{eq:theta_eta}
and thus, recalling~\eqref{eq:S(N)}, 
we conclude the proof.
\end{proof}

We also note that 
we have not 
put any efforts in optimising the condition~\eqref{eq:deg s} in Theorem~\ref{thm:expN}. For example, if one imposes the condition
$$
\delta(\varepsilon(1-0.9\eta),0.1\eta)+1\le d\le  \min\(p, 0.9\log_2\log_2p^r\)+2,
$$
then one obtains the slightly better bound
$$
|S(N)|\le (Np)^{1-0.45\eta}+2Np^{-r\vartheta_{\eta}/2},
$$ 
where 
$$
\vartheta_{\eta}=\frac{0.9\varepsilon(1-0.9\eta)}{2^{2d-2}}.
$$

\begin{remark} 
\label{rem:RN}
We note that similarly to the proof of Theorem~\ref{thm:expN} we can derive directly from Theorem~\ref{thm:charsumpoly} a bound for the exponential sum 
$$
R(N)=\sum_{n\le N} \chi(\xi_n)\psi(f(\xi_n)),
$$
where $f\in\L[X]$ is of the form (i), (ii) or (iii) as defined in Theorem~\ref{thm:charsumpoly} with $d$ satisfying the condition
$$
\delta\(\varepsilon/2,\eta/2\)\le d\le  \min\(p, 0.9\log_2\log_2p^r\)+1.
$$
Let  
$\chi:\L\to\C$ satisfy the conditions $\chi(x+y)=\chi(x)\chi(y)$, $x,y\in\L$, and
$$
\sum_{x\in\cA_s}|\chi(x)|^{2^d}\le B.
$$ 
Then, one obtains
$$
|R(N)|\le 2B^{(d+1)/2^{d+1}}Np^{-K(d+1)/2^{d+1}-r\vartheta_{\eta}}+p(Np)^{1-\eta/2}\max_{n\le N}|\chi(\xi_n)|,
$$ 
where 
$$
\vartheta_{\eta}=\frac{0.9\varepsilon(1-\eta/2)}{2^{2d}}.
$$ 

Indeed, as in the proof of Theorem~\ref{thm:expN} we reduce the problem to estimating $|R(M)|$, where $M=p^K\lfloor N/p^K\rfloor-1$. 
As in the proof of Theorem~\ref{thm:expN}, the set of integers $n\le M$ is of the form~\eqref{eq:n<M}, 
and thus, we now see from~\eqref{eq:dig} that the set of $\xi_n$ is partitioned
into the union of $T+1$ affine spaces of the shape
$A(t) = \cL_K + \zeta_t$, where $\cL_K$ is the $K$-dimensional linear subspace defined by the basis elements $\omega_0,\ldots,\omega_{K-1}$ of $\L$ over $\F_p$, and with some
$\zeta_t  \in \L$, $0 \le t \le T$.

As there are at most $N/q^K$ elements $\xi_t\in\L$ corresponding to $t\le T$ 
as discussed above, our sum becomes
\begin{equation*}
\begin{split}
|R(M)|&\le Nq^{-K}\left|\sum_{x\in\cA(t)}\chi(x)\psi(f(x))\right|,
\end{split}
\end{equation*}
where $\cA(t)=\cL_K + \zeta_t$ for some $\zeta_t  \in \L$, $0 \le t \le T$. Now, the estimate follows applying Theorem~\ref{thm:charsumpoly} to the sum $R(M)$.

Moreover, if $N=p^s-1$, for some $s\le r$, the set of elements $\xi_n$ corresponding to $n\le N$ given by~\eqref{eq:dig} defines an affine subspace $\cA$ of $\L$ of dimension $s$. This case is exactly Theorem~\ref{thm:charsumpoly}, and thus
$$
|R(N)|\le 
2B^{(d+1)/2^{d+1}}p^{s\(1-(d+1)/2^{d+1}\)-r\vartheta},
$$
where $\vartheta$ is defined by~\eqref{eq:theta} (but with $d$ replaced by $d-1$).
\end{remark}

\section{Waring problem in intervals and subspaces}

Let $f\in\L[X]$ be a polynomial of degree $d$. 
In this section we consider first  the Waring problem over an affine subspace $\cA$ of $\L$ of dimension $s$, that is the question of the existence and estimation 
of a positive integer $k$ such that, for any $y\in\L$, the equation 
\begin{equation}
\label{eq:War qs}
f(x_1)+\ldots+f(x_k)=y,
\end{equation}
is solvable in $x_1,\ldots,x_k\in\cA$. 

In particular, we denote by $g(f,q,s)$ the smallest possible 
value of $k$ in~\eqref{eq:War qs} and put   $g(f,q,s) = \infty$
if such $k$ does not exist.

We obtain the following direct consequence of Theorem~\ref{thm:charsumpoly}. 

\begin{thm}
\label{thm:warsubsp}
Let $0<\varepsilon,\eta\le 1$ be arbitrary numbers and let $\gamma_{\eta}$ and 
$\delta(\varepsilon,\eta)$ be defined by~\eqref{eq:gamma} and~\eqref{eq:delta}, respectively.
Let $\cA\subseteq\L$ be an $\eta$-good affine subspace of dimension $s$  over $\K$ with
$$
s\ge \varepsilon r.
$$
If $f\in\L[X]$ is a polynomial of the form (i), (ii) or (iii) as defined in 
Theorem~\ref{thm:charsumpoly}, 
then for $k\ge 3$ with
$$
\(\frac{q^{r\vartheta}}{2}\)^{k-2}> D q^{r-s},
$$ 
where $\vartheta$ is defined by~\eqref{eq:theta} and  $D = \deg f$ in the cases (i) and (ii) 
and $D = \deg g$ in the case (iii), 
we have
$$
g(f,q,s)\le k.
$$
\end{thm}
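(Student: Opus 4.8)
The plan is to run the standard character-sum (circle-method) argument: for each target $y\in\L$ let $N_k(y)$ denote the number of tuples $(x_1,\ldots,x_k)\in\cA^k$ with $f(x_1)+\ldots+f(x_k)=y$, and to show that $N_k(y)>0$ for every $y$ under the stated condition, which is exactly the assertion $g(f,q,s)\le k$. Detecting the equation through the additive characters $\psi$ of $\L$ and using orthogonality, I would write
$$
N_k(y)=\frac{1}{q^r}\sum_{\psi}\psi(-y)\Big(\sum_{x\in\cA}\psi(f(x))\Big)^k,
$$
isolate the trivial character $\psi_0$ as the main term $q^{sk-r}$ (since $\#\cA=q^s$), and treat the remaining sum over nontrivial $\psi$ as an error term to be shown smaller than $q^{sk-r}$.

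For the error term the key is to spend the expensive pointwise estimate on only $k-2$ of the factors while handling the other two cheaply by a second moment. Concretely, I would bound
$$
\sum_{\psi\ne\psi_0}\Big|\sum_{x\in\cA}\psi(f(x))\Big|^k\le\Big(\max_{\psi\ne\psi_0}\Big|\sum_{x\in\cA}\psi(f(x))\Big|\Big)^{k-2}\sum_{\psi}\Big|\sum_{x\in\cA}\psi(f(x))\Big|^2 .
$$
For the maximum I apply Theorem~\ref{thm:charsumpoly} with $\chi\equiv 1$ (so $B=q^s$), which gives the uniform bound $\big|\sum_{x\in\cA}\psi(f(x))\big|\le 2q^{s-r\vartheta}$ for all nontrivial $\psi$, the same estimate already used inside the proof of Theorem~\ref{thm:fBG}. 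The second moment is purely combinatorial: opening the square and summing over all $\psi$ yields $q^r$ times the number of pairs $(x,x')\in\cA^2$ with $f(x)=f(x')$. Since each fibre of $f$ on $\L$ has at most $D$ elements — $D=\deg f$ in cases (i) and (ii), and $D=\deg g$ in case (iii) because the permutation $p$-polynomial $l$ is injective so the fibre of $g(l(x))=v$ is controlled by $\deg g$ — this count is at most $Dq^s$, whence $\sum_{\psi}\big|\sum_{x\in\cA}\psi(f(x))\big|^2\le Dq^{r+s}$.

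Combining the two estimates bounds the error by $\tfrac{1}{q^r}(2q^{s-r\vartheta})^{k-2}Dq^{r+s}=D\,2^{k-2}q^{s(k-1)-r\vartheta(k-2)}$. Requiring this to be strictly less than the main term $q^{sk-r}$ and dividing through by $q^{s(k-1)}$ turns the inequality into $(q^{r\vartheta}/2)^{k-2}>Dq^{r-s}$, which is precisely the hypothesis; therefore $N_k(y)>0$ for every $y\in\L$ and $g(f,q,s)\le k$.

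I expect the only genuinely delicate points to be the bookkeeping in the splitting — reserving exactly two factors for the orthogonality-based second moment, since inserting the pointwise bound into all $k$ factors and then summing over the $q^r-1$ nontrivial characters would be far too lossy — and the fibre-counting that produces the correct constant $D$ uniformly across the three admissible shapes of $f$.
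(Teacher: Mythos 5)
Your proposal is correct and follows essentially the same route as the paper: detect the equation by additive characters, extract the main term $q^{sk-r}$ from the trivial character, split $|S_u|^k=|S_u|^{k-2}|S_u|^2$, apply Theorem~\ref{thm:charsumpoly} (with $\chi\equiv 1$, $B=q^s$) to the $k-2$ factors, and control the second moment by the fibre count $Dq^s$, yielding the same error bound $2^{k-2}Dq^{s(k-1)-r\vartheta(k-2)}$ and hence the stated condition. Your treatment of case (iii) via injectivity of the permutation $p$-polynomial $l$ also matches the paper's remark.
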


\begin{proof}

We use again exponential sums to count the number of solutions $N_k$ of the equation~\eqref{eq:War qs}, that is,
\begin{equation*}
\begin{split}
N_k&=\frac{1}{q^r}\sum_{u\in\L} \sum_{x_1,\ldots,x_k\in \cA} \psi\(u\(\sum_{i=1}^k f(x_i)-y\)\)
\end{split}
\end{equation*}
and thus
\begin{equation*}
\begin{split}
|N_k-q^{sk-r}|&\le \frac{1}{q^r}\sum_{u\in\L^*} \left|S_u\right|^k=\frac{1}{q^r}\sum_{u\in\L^*} \left|S_u\right|^{k-2} \left|S_u\right|^2\\
&\le \frac{1}{q^r}\sum_{u\in\L^*}\left|S_u\right|^{k-2}\sum_{x_1,x_2\in\cA}\psi(u(f(x_1)-f(x_2))),
\end{split}
\end{equation*}
where
$$
S_u=\sum_{x\in\cA} \psi(uf(x)).
$$
Using Theorem~\ref{thm:charsumpoly} for the sum $S_u$ and the estimate $Dq^s$ (for fixed $x_1\in\cA$, there are at most $D$ zeros of $f(x_1)-f(X)$) for the inner sum, we obtain
$$
\left|N_k-q^{sk-r}\right|\le 2^{k-2}Dq^{s(k-1)-r\vartheta(k-2)},
$$
where $\vartheta$ is defined by~\eqref{eq:theta}. 
Imposing now $N_k> 0$, we conclude the proof.

The statement for the polynomial of the type (iii) in Theorem~\ref{thm:charsumpoly} follows as $l$ is a permutation $p$-polynomial as defined in Theorem~\ref{thm:charsumpoly}.
\end{proof}

If $D$ is fixed in Theorem~\ref{thm:warsubsp}, then
for 
$$
k>\frac{r-s}{r}\vartheta^{-1}+2
$$ 
and sufficiently large $q^s$ we have $g(f,q,s)<k$.

Next we consider $q=p$, and for an integer $n\le N$, we have $\xi_n$ defined by~\eqref{eq:dig}. We also study the question of the existence of a positive integer $k$ such that for any $y\in\L$, the equation 
$$
f(\xi_{n_1})+\ldots+f(\xi_{n_k})=y
$$
is solvable in positive integers $n_1,\ldots,n_k\le N$. 
As above, we denote by $G(f,p,N)$ the smallest such 
value of $k$ and put   $G(f,p,N) = \infty$
if such $k$ does not exist. 

\begin{cor}
\label{cor:War N}
Let $f\in\L[X]$ be a polynomial  of the form (i), (ii) or (iii) as defined in Theorem~\ref{thm:charsumpoly} and $p^{s-1}\le N<p^s$ for some $s\le r$ satisfying
$$
s\ge \varepsilon r,
$$
and assume the linear subspace $\cL_s\subseteq\L$ spanned by $\omega_0,\ldots,\omega_{s-1}$ is $\eta$-good.
 Then
for $k\ge 3$ with
$$
\(\frac{p^{r\vartheta}}{2}\)^{k-2}> D p^{r-s+1},
$$ 
where $\vartheta$ is defined by~\eqref{eq:theta} and  $D = \deg f$ in the cases (i) and (ii) 
and $D = \deg g$ in the case (iii), 
we have
$$
G(f,p,N)\le k.
$$
\end{cor}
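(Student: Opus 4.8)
The plan is to deduce the statement from the full-subspace case already handled in Theorem~\ref{thm:warsubsp}, exploiting the fact that the truncation $\{\xi_n\mid n\le N\}$ still contains a full subspace of dimension $s-1$. The key observation is the following containment. Since $N\ge p^{s-1}$, every integer $n$ with $0\le n\le p^{s-1}-1$ satisfies $n\le N$, and for such $n$ the top digit $n_{s-1}$ vanishes, so by~\eqref{eq:dig} we have $\xi_n=\sum_{j=0}^{s-2}n_j\omega_j$. As $n$ runs over $\{0,\ldots,p^{s-1}-1\}$ the digit vector $(n_0,\ldots,n_{s-2})$ runs over all of $\{0,\ldots,p-1\}^{s-1}$, hence $\xi_n$ runs over the entire linear subspace $\cL_{s-1}\subseteq\L$ spanned by $\omega_0,\ldots,\omega_{s-2}$. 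Thus $\{\xi_n\mid n\le N\}\supseteq\cL_{s-1}$, and any solvable instance of the Waring equation over $\cL_{s-1}$ is in particular solvable with indices $n_i\le N$; consequently $G(f,p,N)\le g(f,p,s-1)$.

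First I would therefore apply Theorem~\ref{thm:warsubsp} (with $q=p$) to $\cA=\cL_{s-1}$, an affine, indeed linear, subspace of dimension $s-1$. Crucially, the saving $\vartheta$ in~\eqref{eq:theta} depends only on $\varepsilon$ and $d$, not on the dimension of the subspace, so the same $\vartheta$ is produced; meanwhile the right-hand side of the solvability condition of Theorem~\ref{thm:warsubsp} becomes $Dp^{\,r-(s-1)}=Dp^{\,r-s+1}$. This is precisely the hypothesis $(p^{r\vartheta}/2)^{k-2}>Dp^{\,r-s+1}$ stated in the corollary, so for every $k\ge 3$ meeting it we obtain $g(f,p,s-1)\le k$ and hence $G(f,p,N)\le k$.

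The step that needs care, and the only real obstacle, is checking that $\cL_{s-1}$ satisfies the hypotheses of Theorem~\ref{thm:warsubsp}, equivalently of the character-sum estimate Theorem~\ref{thm:charsumpoly}, that are assumed there for $\cL_s$. The structural form (i)/(ii)/(iii) of $f$ and the value of $D$ are unchanged, and the dimension requirement $s-1\ge\varepsilon r$ holds throughout the range where the bound is meaningful, the boundary case $s=\varepsilon r$ being absorbed by replacing $\varepsilon$ with $(s-1)/r$. For the $\eta$-goodness, from $\cL_{s-1}\subseteq\cL_s$ and the assumed $\eta$-goodness of $\cL_s$ we get, for every $b\in\L$ and every proper subfield $\F\subseteq\L$,
$$
\#\(\cL_{s-1}\cap b\F\)\le\#\(\cL_s\cap b\F\)\le p^{s(1-\eta)}=\(p^{s-1}\)^{1-\eta'},\qquad \eta'=\frac{s\eta-1}{s-1}.
$$
Hence $\cL_{s-1}$ is $\eta'$-good with $\eta'\to\eta$ as $s\to\infty$; since $\vartheta$ is insensitive to this change, the only effect of passing from $\eta$ to $\eta'$ is to raise slightly the lower threshold $\delta(\varepsilon,\eta')$ from~\eqref{eq:delta} on the admissible degrees, which is negligible for the large $s$ in which the estimate has content.

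Finally, one technical remark specific to the integer formulation: the corollary requires \emph{positive} $n_i$, i.e.\ solutions with all $x_i\in\cL_{s-1}\setminus\{0\}$, since $n=0$ corresponds to $\xi_0=0$. As Theorem~\ref{thm:warsubsp} yields of order $p^{(s-1)k-r}$ solutions while those with some coordinate equal to $0$ number only $O\(kp^{(s-1)(k-1)-r}\)$, solutions avoiding $0$ exist once $p^{s}$ is large, which completes the deduction.
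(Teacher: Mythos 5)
Your proof follows the same route as the paper's: since $N\ge p^{s-1}$ the set $\{\xi_n : n\le N\}$ contains the full linear subspace $\cL_{s-1}$, hence $G(f,p,N)\le g(f,p,s-1)$, and Theorem~\ref{thm:warsubsp} applied with $q=p$ in dimension $s-1$ yields exactly the stated condition with $Dp^{r-s+1}$ on the right-hand side. The paper's own proof is precisely this one-line reduction; your extra care about the $\eta$-goodness of $\cL_{s-1}$, the boundary case $s-1\ge\varepsilon r$, and the positivity of the $n_i$ addresses details the paper leaves implicit.
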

\begin{proof}
As $N\ge p^{s-1}$, we have that $G(f,p,N)\le g(f,p,s-1)$, and thus we can apply directly Theorem~\ref{thm:warsubsp} with $s$ replaced with $s-1$, and with $q$ replaced by $p$.
\end{proof}

We note that Corollary~\ref{cor:War N} follows also by applying directly Theorems~\ref{thm:expN}, however the estimate obtained would be slightly weaker.

\section{Remarks and open questions}
We note that we could prove Theorem~\ref{thm:arbiter} only for polynomials of degree less than $p$. The reason behind this is that when one iterates the polynomial $f$ of the form (i), (ii) or (iii), the shape changes and thus we cannot apply anymore Theorem~\ref{thm:charsumpoly}. It would be interesting to extend such a result for more general polynomials. 

Theorem~\ref{thm:fBG}  can also be translated into the language of affine 
dispersers, see~\cite{BsK}. 
We consider $q=p$ prime and $\L=\F_{p^r}$.
\begin{definition}
A function $f:\L\to \F_p$ is an $\F_p$-affine disperser for dimension $s$ if for every affine subspace $\cA$ of $\L$ of dimension at least $s$, we have $\#f(\cA)>1$.
\end{definition}

As a direct consequence of  Theorems~\ref{thm:fBG}, we obtain the following result.
\begin{cor}
Let $0<\varepsilon,\eta\le 1$ 
and let $f\in\L[X]$ be a polynomial 
as defined  in (i), (ii) or (iii) of Theorem~\ref{thm:fBG}.
Then $\pi(f)$, where $\pi:\L\to \F_p$ is a nontrivial $\F_p$-linear map, is an affine disperser for dimension greater than $\varepsilon r$. 
\end{cor}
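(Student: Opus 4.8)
The plan is to argue by contraposition, reading the conclusion straight off Corollary~\ref{cor:fBG}. Unwinding the definition of an affine disperser, what I must show is that for every ($\eta$-good) affine subspace $\cA\subseteq\L$ over $\K=\F_p$ of dimension $s>\varepsilon r$, the image $(\pi\circ f)(\cA)=\pi\(f(\cA)\)$ contains more than one element of $\F_p$. So suppose, towards a contradiction, that $\pi\(f(\cA)\)=\{c\}$ for a single $c\in\F_p$; equivalently, $f(\cA)$ is entirely contained in the fiber $\pi^{-1}(c)$.

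The key structural observation is that this fiber is a \emph{proper} affine subspace of $\L$. Since $\pi$ is a nontrivial $\F_p$-linear map onto the one-dimensional space $\F_p$, it is surjective, so its kernel $\cB_0=\ker\pi$ is an $\F_p$-linear subspace of $\L$ of dimension $r-1$ by rank--nullity, and $\pi^{-1}(c)=\cB_0+v$ for any $v$ with $\pi(v)=c$. Hence $\cB=\pi^{-1}(c)$ is an affine subspace over $\K=\F_p$ of dimension $m=r-1<r$. The assumption $\pi\(f(\cA)\)=\{c\}$ now reads precisely $f(\cA)\subseteq\cB$.

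At this point I apply Corollary~\ref{cor:fBG} to the pair $\cA,\cB$: the form of $f$ and the admissible range of $d=\deg f$ are exactly those inherited from Theorem~\ref{thm:charsumpoly} via Theorem~\ref{thm:fBG}, the subspace $\cA$ is $\eta$-good of dimension $s\ge\varepsilon r$, and the inclusion $f(\cA)\subseteq\cB$ holds by assumption. The corollary then forces $\cB=\L$, which contradicts $\dim_{\K}\cB=r-1<r$. Therefore $\pi\(f(\cA)\)$ cannot be a singleton, so $\#\pi\(f(\cA)\)\ge 2>1$, and $\pi(f)$ is an $\F_p$-affine disperser for dimension greater than $\varepsilon r$.

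The argument is a direct unwinding of definitions and involves no computation; the only point deserving care---and the sole genuine obstacle---is the quantifier in the definition of a disperser, which ranges over \emph{all} affine subspaces of the prescribed dimension, whereas Corollary~\ref{cor:fBG} applies only to $\eta$-good ones. I would resolve this either by stating the corollary for $\eta$-good subspaces, or by working in a regime in which every sufficiently large subspace is automatically $\eta$-good; for instance, when $r$ is prime the only proper subfield is $\F_p$, and then (as observed in the remark following Theorem~\ref{thm:charsumpoly}) every affine subspace of dimension at least $2$ is $\eta$-good for $\eta\le 1/2$, so the disperser property holds for all admissible $\cA$ at once.
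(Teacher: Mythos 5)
Your argument is correct and is exactly the intended derivation: the paper states this corollary without proof as a direct consequence of Theorem~\ref{thm:fBG}, and your route through Corollary~\ref{cor:fBG} --- observing that the fiber $\pi^{-1}(c)$ of a nontrivial $\F_p$-linear map is a proper affine subspace of dimension $r-1$ and then invoking the implication $f(\cA)\subseteq\cB\Rightarrow\cB=\L$ --- is the natural way to make ``direct consequence'' precise. Your closing caveat is also well taken and is a genuine gap in the corollary \emph{as stated}: the disperser definition quantifies over all affine subspaces of dimension greater than $\varepsilon r$, whereas Corollary~\ref{cor:fBG} applies only to $\eta$-good ones, so one must either read an $\eta$-goodness hypothesis into the statement or work in a regime (e.g.\ $r$ prime with $\eta\le 1/2$, as in the remark after Theorem~\ref{thm:charsumpoly}) where every admissible subspace is automatically $\eta$-good; the paper passes over this silently.
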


We note that condition~\eqref{eq:h1} shows that the larger $\varepsilon$ is, the smaller the degree $d$ is, where $d$ is defined as in Theorem~\ref{thm:fBG}. 
For example, if 
$$\varepsilon=\frac{1}{2} \mand \eta \le \frac{4}{5215}
$$
then one has $d>\delta(1/2,\eta)=\gamma_{\eta}^{-1}+3 = 156453$. Furthermore, 
if 
$$\varepsilon=\frac{1}{3} \mand \eta \le  \frac{4}{5215}
$$
then
$d>\delta(1/3,\eta)=2\gamma_{\eta}^{-1}+3 = 312903$.

As mentioned in~\cite{RNS}, obtaining analogues of Theorem~\ref{thm:charsumpoly}, and thus of the rest of results of this paper, for rational functions is an important open direction. For this one has to obtain estimates for the exponential sum
$$
S=\sum_{x\in\cL}\psi\(h(x)\),
$$
where $h\in\L(X)$ is a rational function and $\psi$ a nontrivial additive character.
 Even the case $h(X)=X^{-1}$ 
is still open.

Also of interest is obtaining estimates for
$$
S=\sum_{x\in\cG}\psi\(h(x)\),
$$
where $\cG$ is a multiplicative subgroup of $\L^*$. We note that for the prime field case, such a result would follow from~\cite[Theorem 1]{Bou1}.

Of interest is also the multivariate case of Tehorem~\ref{thm:fBG}, that is, given $F\in\L[X_1,\ldots,X_n]$ and $\cA_1,\ldots,\cA_n,\cB$ affine subspaces of $L$, estimate the size of $F(\cA_1,\ldots,\cA_n)\cap \cB$.

\section*{Acknowledgements}

The author would like to thank Igor Shparlinski for suggesting some extensions of initial results, and for his important comments on earlier versions of the paper. 
The author is also grateful to the Max Planck Institute for Mathematics for hosting the author for two months during the program ``Dynamics and Numbers"  when important progress on this paper was made.

During the preparation of this paper
the author was supported by the UNSW Vice Chancellor's Fellowship.

\end{document}